\documentclass[11pt, twoside, leqno]{amsart} 
\usepackage{graphicx}
\usepackage{amsmath, amssymb, amsthm}
\usepackage[utf8]{inputenc}
\usepackage{hyperref}
\usepackage{fancyhdr}
\usepackage{comment}
\usepackage{placeins} 
\usepackage{comment}
\usepackage[numbers,sort&compress]{natbib}
\usepackage[font=small]{caption}
\usepackage{float}
\usepackage{empheq}
\usepackage{pgfplots}
\pgfplotsset{compat=1.14}
\usetikzlibrary{arrows.meta}
\usetikzlibrary{arrows,decorations.pathmorphing}
\usetikzlibrary{patterns,shapes.geometric}
\usepackage{calc}
\usepackage{stackengine,scalerel}
\usepackage{enumitem}%

\usepackage[makeroom]{cancel}

\newcommand{\Rs}{\mathbb{R}}

\newcommand{\I}{\mathcal{I}}
\newcommand{\dx}{\,\mathrm{d}x}
\newcommand{\ds}{\,\mathrm{d}\gamma}
\newcommand{\dt}{\,\mathrm{d}s}
\newcommand{\diff}{\mathrm{d}}
\newcommand{\dg}{\,\mathrm{d}\gamma}
\newcommand{\bigO}{\mathcal{O}}

\renewcommand{\div}{\operatorname{div}}
\newcommand{\pdfrac}[2]{\frac{\partial #1}{\partial #2}}
\newcommand{\diffrac}[2]{\frac{\diff #1}{\diff #2}}
\newcommand*\laplace{\mathop{}\!\mathbin\bigtriangleup}

\renewcommand*\laplace{\Delta}
\newcommand{\supp}{\operatorname{supp}}

\newcommand{\intt}{\int_{0}^{T}}
\newcommand{\ints}{\int_{0}^{t}}

\newcommand{\intg}{\int_{\bndry}}

\newcommand{\normtso}[3]{\|#1 \|_{L^#2(L^#3(\Omega))}}

\newcommand{\normso}[2]{\|#1 \|_{L^ #2(\Omega)}}

\newcommand{\z}{z^d}
\newcommand{\zd}{\dot z^d}

\newcommand{\nzd}{\nabla \dot z^d}
\newcommand{\nz}{\nabla z^d}
\newcommand{\psid}{\dot\psi}
\newcommand{\psidd}{\ddot\psi}
\newcommand{\npsi}{\nabla \psi}
\newcommand{\npsid}{\nabla \dot \psi}

\newcommand{\inv}[1]{\frac{1}{#1}}
\newcommand{\normho}[2]{\|#1\|_{H^ #2(\Omega)}}
\newcommand{\normtho}[3]{\|#1\|_{L^#2(H^{#3}(\Omega))}}

\newcommand{\normthg}[3]{\|#1\|_{H^#2(H^{#3}(\bndry))}}

\newcommand{\normtlhg}[3]{\|#1\|_{L^#2(H^{#3}(\bndry))}}
\newcommand{\normhg}[2]{\|#1\|_{H^{#2}(\bndry)}}
\newcommand{\normfspe}{\|f\|_{W^{1,1}(L^2(\Omega))}}
\newcommand{\ctr}{C_{\textup{tr},\bndry}}
\newcommand{\co}{C_{\Omega,4}}

\newcommand{\ce}{C_{E,\Omega}}

\newcommand{\normx}[1]{\|#1\|_{\hat{X}}}
\newcommand{\tpsi}{\tilde{\psi}}
\newcommand{\bndry}{\partial \Omega}
\newcommand{\tp}{\tilde{p}}
\newcommand{\dxs}{\, \textup{d}x\textup{d}s}
\newcommand{\ula}{\underline{a}}
\newcommand{\ola}{\overline{a}}

\usepackage{calc}
\usepackage{stackengine,scalerel}
\newcommand\dhookrightarrow{\mathrel{\ThisStyle{\abovebaseline[-.6\LMex]{%
  \ensurestackMath{\stackanchor[.15\LMex]{\SavedStyle\hookrightarrow}{%
  \SavedStyle\hookrightarrow}}}}}}

\usepackage{accents}
\newcommand{\dbtilde}[1]{\accentset{\approx}{#1}}

\def\env@cases#1{%
  \let\@ifnextchar\new@ifnextchar
  \left\lbrace\def\arraystretch{1.2}%
  \array{@{}#1@{\quad}l@{}}}
\makeatother

\newcommand\smallO{
  \mathchoice
    {{\scriptstyle\mathcal{O}}}
    {{\scriptstyle\mathcal{O}}}
    {{\scriptscriptstyle\mathcal{O}}}
    {\scalebox{.6}{$\scriptscriptstyle\mathcal{O}$}}
  }
\usepackage{enumitem}%

\usepackage{cleveref} 
\crefname{hyp}{Hypothesis}{Hypotheses}
\Crefname{hyp}{Hypothesis}{Hypotheses}
\newtheorem{asu}{Assumption}
\newcounter{subassumption}[asu]
\renewcommand{\thesubassumption}{\quad \small $\bullet$}
\makeatletter
\renewcommand{\p@subassumption}{\theasu}
\makeatother
\newcommand{\subasu}{
	\refstepcounter{subassumption}%
	\thesubassumption~\ignorespaces}
\crefname{lemma}{Lemma}{Lemmas}
\Crefname{lemma}{Lemma}{Lemmas}
\crefname{theorem}{Theorem}{Theorems}
\Crefname{theorem}{Theorem}{Theorems}
\crefname{asu}{Assumption}{Assumptions}
\Crefname{asu}{Assumption}{Assumptions}
\crefname{subassumption}{Assumption}{Assumptions}
\Crefname{subassumption}{Assumption}{Assumptions}
\crefname{subasu}{Assumption}{Assumptions}
\Crefname{subasu}{Assumption}{Assumptions}
\crefname{section}{Section}{Sections}
\Crefname{section}{Section}{Sections}
\crefname{proposition}{Proposition}{Propositions}
\Crefname{proposition}{Proposition}{Propositions}

\newcommand{\absd}{\vert d\vert}
\newcommand{\Mostafa}[1]{\textcolor{black}{#1}}
\theoremstyle{thmstyleone}%
\newtheorem{theorem}{Theorem}
\newtheorem{proposition}[theorem]{Proposition}%
\newtheorem{lemma}{Lemma}%

\theoremstyle{thmstyletwo}%
\newtheorem{remark}{Remark}%

\theoremstyle{thmstylethree}%

\raggedbottom

\begin{document}
	\title[Analysis of shape optimization problems in nonlinear acoustics]{\vspace*{-1.2cm}Analysis of general shape optimization problems in nonlinear acoustics}
	
	\subjclass[2010]{35L72, 49J20}
	\keywords{nonlinear acoustics, shape optimization, Kuznetsov's equation, energy method, HIFU}
	\author[M. Meliani and V. Nikoli\'{c}]{\small Mostafa Meliani$^*$ and Vanja Nikoli\'{c}}
	\address{ 
		Department of Mathematics,
		Radboud University   \\ 
		Heyendaalseweg 135,
		6525 AJ Nijmegen, The Netherlands}
	\thanks{$^*$Corresponding author: M.Meliani, \href{mailto:mostafa.meliani@ru.nl}{mostafa.meliani@ru.nl}}
	\vspace*{8mm}
	\vspace*{-7mm}
	\begin{abstract}
	In various biomedical applications, precise focusing of nonlinear ultrasonic waves is crucial for efficiency and safety of the involved procedures. This work analyzes a class of shape optimization problems constrained by general quasi-linear acoustic wave equations that arise in high-intensity focused ultrasound (HIFU) applications. 
	Within our theoretical framework, the Westervelt and Kuznetsov equations of nonlinear acoustics are obtained as particular cases. The quadratic gradient nonlinearity, specific to the Kuznetsov equation, requires special attention throughout. To prove the existence of the Eulerian shape derivative, we successively study the local well-posedness and regularity of the forward problem, uniformly with respect to shape variations, and prove that it does not degenerate under the hypothesis of small initial and boundary data. Additionally, we prove H\"older-continuity of the acoustic potential with respect to domain deformations. We then derive and analyze the corresponding adjoint problems for several different cost functionals of practical interest and conclude with the expressions of well-defined shape derivatives. 
	\end{abstract}
	\maketitle
	\vspace*{-4mm}  
\maketitle
\vspace*{-0.4cm}

\section{Introduction}
In 1848, Stokes published an article on a difficulty in the theory of sound, namely the difficulty to apprehend the nonlinear behavior of acoustic waves \cite{stokes1848liv}. 
Nonlinear wave phenomena still pose challenging questions to this day. 
In various biomedical applications~\cite{muir1980nonlinear,muir1980prediction,kennedy2003high,wu2001pathological,yoshizawa2009high}, 
understanding and manipulating nonlinear ultrasonic waves is essential in ensuring the efficiency and safety of the involved procedures. 
Among such procedures, High-Intensity Focused Ultrasound (HIFU) is emerging as one of the most promising non-invasive tools in treatments of various solid
 cancers~\cite{zhou2011high,maloney2015emerging,kennedy2003high}. However, its wide-scale use hinges on the ability to guarantee the desired sound 
  behavior in the focal region. \Mostafa{Other applications of HIFU include, among others, lithotripsy~\cite{yoshizawa2009high} and stroke treatments~\cite{pajek2012design}.}\\
 \indent \Mostafa{HIFU} waves are commonly excited by one or several piezoelectric transducers arranged on a spherical 
  surface~\cite{martins2012optimization,pajek2012design,rosnitskiy2016design}. Changes in their shape directly affect the propagation and focusing of sound waves. 
  \Mostafa{Depending on the application, the requirements for the pressure levels and wave distortion at the focal region differ. 
  Designing the transducer arrangement for a given set of requirement gives rise to a practically relevant optimization problems.} \\
\indent Motivated by this, in the present work we conduct the analysis of a class of shape optimization problems subject to the following model of ultrasound propagation:
\begin{align} \label{eq:Kuzn_potential}
 (1-2k\dot \psi)\ddot \psi - c^2 \laplace \psi - b \laplace \dot \psi - 2\sigma\nabla\dot\psi \cdot \nabla\psi = 0,
\end{align}
\Mostafa{on a bounded smooth domain $\Omega$, }assuming nonhomogeneous Neumann boundary excitation. The particular choice of the parameters $k$ and $\sigma$ allows us to cover the strongly damped linear wave equation as well as the classical Westervelt~\cite{westervelt1963parametric} and Kuznetsov~\cite{kuznetsov1970equations} equations of nonlinear acoustics (in potential form)
 within our analysis. We refer to \cref{sec:wellp} below for a more detailed discussion of the studied model.\\
\indent  The well-posedness and regularity of nonlinear acoustic waves on fixed domains have been by now widely studied in the literature; see, e.g.,~\cite{kaltenbacher2009global, kaltenbacher2011well, kaltenbacher2019vanishing, dekkers2017cauchy, KALTENBACHER20191595, kaltenbacher2016shape, kiyoshi1993, ang1988strongly, meyer2011optimal}. 
 Shape differentiability of acoustic wave equations and related optimization problems have been mostly examined subject to linear evolution; see, e.g.,~\cite{cagnol1999shape, bociu2017hyperbolic, hinz2021non} and the references given therein. The few results~\cite{kaltenbacher2011sensitivity, kaltenbacher2016shape, muhr2017isogeometric, nikolic2017sensitivity} available in shape optimization with quasilinear acoustic models focus on the less involved Westervelt equation in pressure form. \\
\indent In this paper, we enrich the existing literature with the study of a quasilinear model \eqref{eq:Kuzn_potential} with general quadratic nonlinearities (i.e., the terms $\dot{\psi} \ddot{\psi}$ and $\nabla\dot\psi \cdot \nabla\psi$). 
 The presence of these terms necessitates the use of higher-order energies in the analysis of the state problem compared to~\cite{kaltenbacher2016shape, muhr2017isogeometric}. In turn, new arguments need to be devised in the study of H\"older-continuity with respect to shape variations. 
 Furthermore, the quadratic gradient nonlinearity leads to adjoint problems 
 \Mostafa{ with boundary conditions in the form:}
 \Mostafa{ 
 \[
 c^2\pdfrac{p}{n}-b\pdfrac{\dot p}{n} +2 \sigma \pdfrac{\psi}{n} \dot p = 0,
 \]
where $p$ is the adjoint variable. When $\sigma \neq 0$, such a boundary condition requires special attention in the well-posedness analysis.
 }

Besides, we not only treat the classical 
 $L^2(L^2)$-tracking problem on  $D = D_S\times (t_0,t_1)$, where the acoustic velocity potential $\psi$ should match a desired output on a given spatial focal region $D_S \Mostafa{\subsetneq \Omega} $ within a certain time interval $(t_0, t_1)$, we also consider in this work an $L^2$-matching objective at final time:
 \begin{align} \label{objective_2}
	J_T(\psi,\Omega) = \frac{1}{2}\int_{\Omega} \left(\psi(T) - \psi_{D_S}\right)^2 \chi_{D_S} \dx.
	\end{align}\\[2mm]
\noindent Additionally, we study an $L^2(L^2)$-tracking functional on $\psid$,  
corresponding (up to a multiplicative constant) to tracking the sound pressure\Mostafa{. We denote by $f_D$ the desired pressure on $D$. The cost functional is then given by}	
 \begin{align}\label{objective_3}
	J_\textup{p}(\psi,\Omega) = \frac{1}{2}\int_0^T\int_{\Omega} \left(\psid - f_D\right)^2 \chi_{D} \dxs;
\end{align}  
see Figure~\ref{fig:setup} for an illustration of the optimization setup.

The shape analysis is carried out by adopting the adjoint-based variational framework developed in \cite{ito2008variational}, 
which avoids the need to study the shape sensitivity of the state variable. Instead we rely on the aforementioned H\"older continuity of the potential with respect to domain perturbations in a suitable norm.
 We show that with appropriate (different) adjoint problems, these objectives have the same final derivative expression which is of the form required by the Delfour--Hadamard--Zol\'esio structure theorem; see~\cite[Theorem 3.6]{delfour2011shapes}. The well-posedness analysis of the state and adjoint problems ensures the well-definedness of the expression.
\begin{figure}[H]
\centering  
 \scalebox{0.85}
 {
 \begin{tikzpicture}[
        ->,    > = stealth',
       shorten > = 1pt,auto,
   node distance = 3cm,
      decoration = {snake,   
                    pre length=3pt,post length=7pt,
                    },
main node/.style = {circle,draw,fill=#1,
                    font=\sffamily\Large\bfseries},
                    ]
\draw[black, fill = white, dashed ,fill opacity = 0.5, semithick, even odd rule]
(-1,-1) rectangle (6.5,4);
\draw[black, fill = white, fill opacity = 0.5, semithick, even odd rule,rounded corners=10]
(0,0) rectangle (5.5,3);
\draw[black, fill = white, dashed,  fill opacity = 0.5, semithick, even odd rule] 
(2.75, 1.5) circle (0.8);
\node at (5.2, 2.5) {\large $\Omega$};
\node at (2.75, 1.5) {\large $D_S$};
\node at (-0.5,1.5) {\large $\partial\Omega$};
\node at (1,-0.5) {\large hold-all domain $U$};
\node[align=center] at (1,2.3) {Propagating \\ wave};
\node (1) at (5.6, 1.5) {};
\node (2) at (4, 1.5) {};
\node (3) at (-0.1, 1.7) {};
\node (4) at (1.5, 1.7) {};
\path[ draw = blue , decorate] (1)  -- node[auto] {} (2);
\path[ draw = blue , decorate] (3)  -- node[auto] {} (4);
\end{tikzpicture}
 } 
~\\[2mm]
\caption{The optimization setup, where $D = D_S \times (t_0,t_1)$}
\label{fig:setup}
\end{figure}
\vspace*{-2mm}

The paper is organized through multiple sections, each dealing with questions relating to the existence and well-definedness of the derivative.
In \cref{sec:wellp}, we discuss the well-posedness of the forward problem under Neumann boundary conditions and the regularity of its solution, also uniformly with respect to shape deformations. In \cref{sec:Holder_cont}, we prove the H\"older continuity of the velocity potential with respect to domain perturbations.
We then study the well-posedness and regularity of the adjoint problem for the $L^2(L^2)$-tracking objective. In \cref{sec:shapeanalysis}, we rigorously derive the shape derivative in the direction of any sufficiently smooth vector field deformation. Additionally, we give a brief analysis of the $L^2$-matching objective at final time \eqref{objective_2} and of the pressure cost functional \eqref{objective_3} and give their shape derivative expression.
 \subsection*{Notation} 
We shall frequently use the notation $x \lesssim y$ which stands for $x \leq Cy$, where $C$ is a generic constant that depends only on the reference domain $\Omega$ and, possibly, the final time $T$. We will denote by $\co$ the embedding constant of $H^1(\Omega)$ into $L^4(\Omega)$  and by $\ctr$ the norm of the trace operator
 $\operatorname{tr_{\bndry}}: H^1(\Omega) \rightarrow H^{1/2}(\bndry)$.
We denote by $\dot x$, $\ddot x$ and $\dddot x$ the first, second and third, respectively, derivative of the quantity $x$ with respect to time. Given two Banach spaces $X$ and $Y$, we shall refer to the continuous (resp. compact) embedding of $X$ into $Y$ by $X\hookrightarrow Y$ (resp. $X\dhookrightarrow Y$).

\section{Analysis of the state problem} \label{sec:wellp}
In this section, we discuss the well-posedness of \eqref{eq:Kuzn_potential}
as well as of its more general counterpart with variable smooth coefficients; see~\eqref{eq:holder2} below. \\
\indent Throughout this work we assume that $\Omega$ is a $C^{2,1}$-regular, bounded domain in $\Rs^l$ with $l\in \{2,3\}$. 
In \eqref{eq:Kuzn_potential}, $\psi$ denotes the acoustic velocity potential. It is related to the acoustic pressure $\textup{p}$ via 
\[\textup{p}= \rho \dot{\psi},\]
where $\rho$ is the mass density of the medium. The constant $c>0$ denotes the speed of sound and $b>0$ is the so-called sound diffusivity. Note that the presence of the strong damping ($b>0$) in the  equation contributes to its parabolic-like character; see, e.g,~\cite{kaltenbacher2011well, kiyoshi1993}. The
constant $k$ is a function of the coefficient of nonlinearity of the medium $\beta_a$ and of the speed of propagation $c$.

Equation \eqref{eq:Kuzn_potential} can be seen as the Westervelt or Kuznetsov equation of nonlinear acoustics, 
depending on the choice of parameters $k$ and $\sigma$. Indeed,
\[
(k,\sigma)=\left\{
\begin{array}{ll} 
\left(\frac{\beta_a}{c^2},0\right) 
&\textrm{for the Westervelt equation~\cite{westervelt1963parametric}},\\
\\
\left(\frac{\beta_a-1}{c^2},1\right)
& \textrm{for the Kuznetsov equation~\cite{kuznetsov1970equations}}.
\end{array}
\right.
\]
The choice between these two models depends on whether one can assume that cumulative nonlinear effects dominate local nonlinear effects; see the discussion in~\cite[Ch.\ 3, Section 6]{hamilton1998nonlinear}.
 We refer to \cite{jordan2016survey} for a survey of models in nonlinear acoustics. Going forward, we assume that $k$, $\sigma \in \Rs$. \\
\indent We couple \eqref{eq:Kuzn_potential} with an excitation 
modeled by Neumann boundary conditions as well as the initial data and study the following problem:

\begin{empheq}{align}
\label{pb:wellposedness}
	\begin{array}{rl}
	\left(1-2k\dot \psi\right)\ddot \psi - c^2 \laplace \psi - b \laplace \dot \psi 
	- 2\sigma\nabla\dot\psi\cdot\nabla\psi= 0 &\quad \mathrm{on\;} Q, \\
	\pdfrac{\psi}{n} =  g &\quad \mathrm{on\;} \Sigma, \\
	\psi(0) = \psi_0,\; \dot\psi(0) = \psi_1  &\quad \mathrm{on\; \Omega},\\
	\end{array}
\end{empheq}
where we have denoted $Q =  \Omega \times (0,T)$ and $\Sigma= \partial \Omega \times (0,T)$. Mathematical analysis of the Kuznetsov equation with homogeneous Dirichlet data can be found in~\cite{kiyoshi1993}. 
The Kuznetsov equation in pressure form with nonhomogeneous Neumann data is studied in~\cite{kaltenbacher2011well}. 
The analysis of \eqref{pb:wellposedness} follows along similar lines with main differences in the energy arguments, which we present here. For the sake of readability, we first study the well-posedness of \eqref{pb:wellposedness} and then generalize the argument to the case of variable coefficients resulting from domain perturbations; see \cref{prop:uniformwellp} below.

\begin{asu}\label{asu:ori} We make the following assumptions on the regularity and compatibility of data in \eqref{pb:wellposedness}:\\[1mm]
  \subasu \label{subasu:initori} $\psi_0 \in H^3(\Omega)$ and $\psi_1 \in H^2(\Omega)$;\\[1mm]
  \subasu \label{subasu:gspaceori}$g \in H^2(0,T;H^{-1/2}(\bndry)) \cap H^1(0,T;H^{3/2}(\bndry)) $, \\ \hphantom{(H2)}\ $\dot g \in L^\infty(0,T; H^{1/2}(\bndry))$; \\[1mm]
  \subasu \label{subasu:compatibilty} $\dfrac{\partial \psi_0}{\partial n} = g(0)$, $\dfrac{\partial \psi_1}{\partial n} = \dot g(0).$ 
\end{asu}

\noindent \textbf{Auxiliary inequality.} We recall that the following embeddings hold: $H^1(\Omega) \dhookrightarrow L^4 (\Omega) \hookrightarrow L^2 (\Omega)$; see~\cite[Theorem 1.3.2]{zheng2004nonlinear}. Let $u\in H^1(\Omega)$. 
Then by Ehrling's Lemma \cite[Theorem 7.30]{renardy2006introduction} and Young's inequality, for all $\epsilon >0$, 
there exists a constant $\ce(\epsilon)$, such that
\begin{align}\label{eq:usefulineq}
\normso{u}{4}^2 \leq \ce(\epsilon)\normso{u}{2}^2 + \epsilon \normso{\nabla u}{2}^2.
\end{align}
This result will be used throughout the analysis to avoid relying on $\normso{u}{4}^2 \leq \co^2 \normho{u}{1}^2$ when critical. 
This small change allows us to avoid imposing additional assumptions on the smallness of 
$\ddot\psi$ below (as done, for example, on $\dot y$ in \cite[Theorem 1]{kaltenbacher2016shape}, where the Westervelt equation in pressure form is analyzed).\\[2mm]
\noindent \textbf{Linearized equation.} Following the general approach of, e.g., \cite{kaltenbacher2016shape, kaltenbacher2011well}, we first consider a linearized problem 
with a variable coefficient $a$ and a source term $f$:
\begin{empheq}{align}\label{eq:linearized1}
	\begin{array}{rl}
		a(t)\ddot \xi - c^2 \laplace \xi 
			 - b\laplace \dot \xi  = f & \qquad \mathrm{on\;}\ Q,\\
	\Mostafa{\pdfrac{\xi}{n} = g  }&\Mostafa{\qquad \mathrm{on\;} \Sigma,}\\
	\Mostafa{\xi(0) = \psi_0,\; \dot\xi(0) = \psi_1}&\Mostafa{\qquad \mathrm{on\; \Omega}},
	\end{array}
\end{empheq}
supplemented by the same initial and boundary conditions as in \eqref{pb:wellposedness}. To study such a problem, we need suitable assumptions on the non-degeneracy and smallness of $a$.
\begin{asu}\label{asu:asulin} We make the following additional assumptions on the source term and the variable coefficient in \eqref{eq:linearized1}:\\[2mm]
  \subasu \label{subasu:boundedness}$a \in L^\infty(0,T;H^2(\Omega))\cap L^2(0,T;H^3(\Omega))$, $\dot a \in L^\infty(0,T;L^2(\Omega))$.
   Furthermore, \Mostafa{ let there exist $\ula$, $\ola > 0$ such that} 
 \[\Mostafa{\ula}<a(t)< \Mostafa{\ola} \ \text{almost everywhere on } \ \Omega \ \text{ for almost every } \ t\in[0,T];\] 
  \subasu \label{subasu:fspace}$f \in L^2(0,T;H^1(\Omega)) \cap W^{1,1}(0,T;L^2(\Omega))$. 
\end{asu}

\begin{proposition}\label{th:linearized}
Let \cref{asu:ori,asu:asulin} hold. Then initial boundary-value problem for \eqref{eq:linearized1} admits a unique solution in
\begin{align} \label{def_W}
\begin{split}
W = \{\xi\in L^\infty(0,T, H^3(\Omega)):\;\dot \xi \in L^\infty(0,T, H^2(\Omega)) \cap L^2(0,T, H^3(\Omega)) ,\; 
\\ \ddot \xi\in L^\infty(0,T, L^2(\Omega)) \cap L^2(0,T, H^1(\Omega))\}.
\end{split}
\end{align}
Furthermore, the solution satisfies the energy estimate 
\begin{align}\label{eq:energynormth}
\begin{split}
\left\| \xi\right\|_W^2
\lesssim \; & \begin{multlined}[t] \normho{\psi_0}{3}^2 + \normho{\psi_1}{2}^2 + \normtho{f}{2}{1}^2 
+ \normtso{\dot f}{1}{2}^2 \\+\normthg{g}{1}{3/2}^2 + \normthg{g}{2}{-1/2}^2+ \normtlhg{\dot g}{\infty}{1/2}^2 . 
\end{multlined}
\end{split}
\end{align}
\end{proposition}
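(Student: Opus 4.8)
The plan is to construct a solution by a Faedo--Galerkin approximation, establish a priori bounds reproducing every component of the $W$-norm uniformly in the discretization index, and then pass to the limit. Linearity of \eqref{eq:linearized1} will make both the uniqueness and the estimate \eqref{eq:energynormth} fall out of these same bounds. First I would fix a basis $\{w_j\}_{j\ge1}$ of smooth functions dense in $H^3(\Omega)$ (for instance the eigenfunctions of the Neumann Laplacian) and seek $\xi_n=\sum_{j=1}^n d_j^n(t)\,w_j$ solving the Galerkin projection of the weak form
\[
\into a\,\ddot\xi_n\phi + c^2\into\nabla\xi_n\cdot\nabla\phi + b\into\nabla\dot\xi_n\cdot\nabla\phi = \into f\phi + c^2\intg g\,\phi + b\intg\dot g\,\phi,
\]
for all $\phi\in\operatorname{span}\{w_1,\dots,w_n\}$, with $\xi_n(0),\dot\xi_n(0)$ the projections of $\psi_0,\psi_1$. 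The nonhomogeneous Neumann datum enters only through the boundary integrals, so no prior lifting of $g$ is needed. Since $a\ge\ula>0$ by \cref{subasu:boundedness}, the mass matrix is invertible and this linear ODE system has a unique solution on all of $[0,T]$. Reading the equation at $t=0$ expresses $\ddot\xi_n(0)$ through $\laplace\psi_0$, $\laplace\psi_1$ and $f(0)$, which is bounded in $L^2(\Omega)$ thanks to \cref{subasu:initori} and $f\in W^{1,1}(0,T;L^2(\Omega))\hookrightarrow C([0,T];L^2(\Omega))$; the compatibility conditions \cref{subasu:compatibilty} make this consistent with the boundary data.

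The heart of the argument is a two-level energy estimate, uniform in $n$. Testing with $\dot\xi_n$ controls $\nabla\xi_n,\dot\xi_n$ in $L^\infty(0,T;L^2(\Omega))$ and $\nabla\dot\xi_n$ in $L^2(0,T;L^2(\Omega))$; differentiating in time and testing with $\ddot\xi_n$ controls $\ddot\xi_n$ in $L^\infty(0,T;L^2(\Omega))\cap L^2(0,T;H^1(\Omega))$ and $\nabla\dot\xi_n$ in $L^\infty(0,T;L^2(\Omega))$. In the higher-order step the leading term is rewritten as
\[
\into a\,\dddot\xi_n\,\ddot\xi_n = \frac12\diffrac{}{t}\into a\,\ddot\xi_n^2 - \frac12\into\dot a\,\ddot\xi_n^2,
\]
and the crucial point is that the remaining term $\into\dot a\,\ddot\xi_n^2$ is bounded using $\dot a\in L^\infty(0,T;L^2(\Omega))$ together with the auxiliary inequality \eqref{eq:usefulineq}: this lets the resulting $L^4$-type contribution be absorbed into the dissipative term $b\|\nabla\ddot\xi_n\|_{L^2(\Omega)}^2$ without any smallness hypothesis on $\ddot\xi$. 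The boundary contributions produced by integration by parts — the integrals of $g\,\dot\xi_n$, $\dot g\,\dot\xi_n$, $\dot g\,\ddot\xi_n$ and $\ddot g\,\ddot\xi_n$ over $\bndry$ — are estimated with the trace constant $\ctr$ and Young's inequality, after integrating the $\dot g\,\ddot\xi_n$ term by parts in time. The three boundary norms on the right-hand side of \eqref{eq:energynormth} then appear exactly: $g\in H^1(0,T;H^{3/2}(\bndry))$ (which also yields $\dot g\in L^2(0,T;H^{3/2}(\bndry))$), $g\in H^2(0,T;H^{-1/2}(\bndry))$ for the $\ddot g$ duality pairing, and the separate bound $\dot g\in L^\infty(0,T;H^{1/2}(\bndry))$ for the boundary term produced by the time integration by parts. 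Summing the two levels and applying Grönwall's inequality closes the estimate.

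It remains to recover the top spatial regularity $\xi\in L^\infty(0,T;H^3(\Omega))$ and $\dot\xi\in L^2(0,T;H^3(\Omega))$, which the energy estimate alone does not provide. For this I would set $u=c^2\xi+b\dot\xi$ and use that $-\laplace u = f - a\ddot\xi$ with $\partial_n u = c^2 g + b\dot g$, so elliptic regularity for the Neumann problem bounds $\|u\|_{H^3(\Omega)}$ by $\|f-a\ddot\xi\|_{H^1(\Omega)}$ and the boundary data. The product estimate $\|a\ddot\xi\|_{H^1(\Omega)}\lesssim\|a\|_{H^2(\Omega)}\|\ddot\xi\|_{H^1(\Omega)}$ (valid for $l\le3$, since $H^2(\Omega)$ multiplies $H^1(\Omega)$), combined with $\ddot\xi\in L^2(0,T;H^1(\Omega))$ and $\dot g\in L^2(0,T;H^{3/2}(\bndry))$, yields $u\in L^2(0,T;H^3(\Omega))$. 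Finally, the first-order-in-time relation $b\dot\xi + c^2\xi = u$ with $\xi(0)=\psi_0$ is solved explicitly, transferring the regularity of $u$ to $\xi\in L^\infty(0,T;H^3(\Omega))$ and hence to $\dot\xi=\tfrac1b(u-c^2\xi)\in L^2(0,T;H^3(\Omega))$.

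With the uniform bounds in hand, I would extract weakly-$*$ convergent subsequences in each space composing the $W$-norm via Banach--Alaoglu; because \eqref{eq:linearized1} is linear and $a$ is a fixed $L^\infty$ multiplier, weak-$*$ convergence already suffices to identify the limit as a solution, and weak lower semicontinuity of the norms delivers \eqref{eq:energynormth}. Uniqueness is immediate: the difference of two solutions solves \eqref{eq:linearized1} with vanishing data, which \eqref{eq:energynormth} forces to be zero. I expect the main obstacle to lie in the higher-order energy identity — closing the bound on $\ddot\xi$ in $L^\infty(0,T;L^2(\Omega))\cap L^2(0,T;H^1(\Omega))$ without a smallness condition, which hinges on combining $\dot a\in L^\infty(0,T;L^2(\Omega))$ with \eqref{eq:usefulineq}, and on matching the boundary terms precisely to the three $g$-norms calibrated in \eqref{eq:energynormth}; the elliptic bootstrap and the limit passage are comparatively routine.
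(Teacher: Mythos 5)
Your proposal follows essentially the same route as the paper's proof: a Galerkin approximation, the two-level energy estimates built on the identity $a\ddot\xi\,\dot\xi=\tfrac12\diffrac{}{t}(a\dot\xi^2)-\tfrac12\dot a\dot\xi^2$ with Ehrling's inequality \eqref{eq:usefulineq} absorbing the $\dot a$ contribution into the dissipation (no smallness of $\ddot\xi$), the estimate of $\ddot\xi(0)$ from the equation at $t=0$, the elliptic bootstrap for $c^2\xi+b\dot\xi$ followed by the explicit first-order ODE, and weak compactness. The only cosmetic deviations are that the paper obtains the $\normtlhg{\dot g}{\infty}{1/2}$ contribution from the $L^\infty$-in-time elliptic regularity step rather than from a time integration by parts of the $\dot g\,\ddot\xi_n$ boundary term, and that it adds a zeroth-order term (writing $-\laplace\Theta+\Theta=\tilde f$) so the Neumann problem is coercive and no compatibility condition is needed.
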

\begin{remark}
The regularity of $\xi \in W$ implies that $\xi$ is continuous from $[0,T]$ into $H^3(\Omega)$ and $\dot\xi$ is weakly continuous from $[0,T]$ into $H^2(\Omega)$ \cite[Lemma 3.3]{temam2012infinite}. Note that the hidden constant in \eqref{eq:energynormth} depends on $a$ and tends to $+ \infty$ as $T \rightarrow + \infty$.
\end{remark}
\begin{proof}
The proof can be conducted rigorously through a Galerkin approximation of the weak form of the PDE by restriction to appropriate finite dimensional subspaces of $H^3(\Omega)$; see, for example, 
\cite[Chapter 7]{evans10} and \cite[Chapter XVIII]{dautray1999mathematical}. We focus here on the derivation of a uniform energy estimate, which differs from the related results in the literature (cf.~\cite[Theorem 1]{kaltenbacher2016shape}) and will enable the upcoming fixed-point argument. For notational simplicity, we omit the index indicating the Galerkin approximation below. \\

\noindent\textbf{Lower-order energy estimate.}
To obtain the desired energy estimate, we test the semi-discrete problem by $\dot\xi$ and integrate in time over $(0,t)$ to obtain
\begin{multline*}
\ints \left[ (a(s)\ddot \xi ,\dot \xi) +c^2 (\nabla \xi, \nabla \dot\xi) + b (\nabla \dot \xi, \nabla \dot\xi) \right]\dt \\
= \ints (f(s),\dot\xi)\dt + \ints\intg (c^2g + b \dot g) \dot\xi\ds\dt,
\end{multline*}
which, after using the relation 
\begin{align} \label{eq:energy_id_1}
a\ddot \xi \dot \xi = \frac{1}{2} \diffrac{}{t}(a \dot \xi^2) - \frac{1}{2} (\dot a \dot \xi^2),
\end{align}
yields the energy identity
\begin{align*}
\begin{split}
&\frac{1}{2} \|\sqrt{a(t)}\dot\xi(t)\|^2_{L^2(\Omega)} + \frac{c^2}{2} \|\nabla \xi(t)\|^2_{L^2(\Omega)} 
+ b \ints \|\nabla \dot \xi\|^2_{L^2(\Omega)} \dt  \\= \;
& \frac{1}{2} \ints (\dot a \dot \xi, \dot \xi) \dt+ \frac{1}{2} \|\sqrt{a(0)}\xi_1\|^2_{L^2(\Omega)} + \frac{c^2}{2} \|\nabla \xi_0\|^2_{L^2(\Omega)}
+ \ints  (f(s),\dot\xi)  \dt \\
&+ \ints\intg (c^2g + b \dot g) \dot\xi\ds\dt.
\end{split}
\end{align*}
We can estimate the first term on the right using Ehrling's inequality \eqref{eq:usefulineq} as follows:
\begin{align*}
\begin{split}
\vline(\dot a \dot \xi, \dot \xi)\vline \leq \;& \normso{\dot a}{2} \normso{\dot \xi}{4}^2 \leq   \normso{\dot a}{2} 
\left( \ce(\epsilon)\normso{\dot\xi}{2}^2 + \epsilon \normso{\nabla \dot\xi}{2}^2\right).
\end{split}
\end{align*}
We then proceed to estimating the source term 
\begin{align*}
\ints (f,\dot\xi) \dt \leq \ints & \left[\frac{1}{2} \normso{f}{2}^2 +
\frac{1}{2} \normso{\dot\xi}{2}^2 \right]\dt,
\end{align*}
and the boundary term
\begin{align*}
\ints\intg (c^2g + b \dot g) \dot\xi\ds\dt \leq\; & \normtlhg{c^2g + b \dot g}{2}{-1/2}\left(\ints\normhg{\dot\xi}{1/2}^2 \dt\right)^{1/2}\\
\leq \; & \frac{1}{4\epsilon} \normtlhg{c^2g + b \dot g}{2}{-1/2}^2 + \epsilon \ctr^2 \ints\normho{\dot\xi}{1}^2\dt.
\end{align*}
Using \cref{asu:asulin}, we then have
\begin{align} \label{eq:lowenergy1}
\begin{split}
& \Mostafa{\frac{\ula}{2}}  \normso{\dot\xi(t)}{2}^2 + \frac{c^2}{2} \normso{\nabla \xi(t)}{2}^2 
\\ &+ \left(b -\epsilon\frac{\normtso{\dot a}{\infty}{2}+2 \ctr^2}{2} \right) \ints \normso{\nabla \dot \xi}{2}^2 \dt 
\\\leq\;
& \frac{\ce(\epsilon)\normtso{\dot a}{\infty}{2}+1 +2\epsilon \ctr^2}{2} \ints \normso{\dot \xi}{2}^2 \dt  +
\Mostafa{\frac{\ola}{2}} \normso{\dot\xi(0)}{2}^2 \\
&+ \frac{c^2}{2} \normso{\nabla\xi(0)}{2}^2
+\frac{1}{2}  \ints  \normso{f}{2}^2  \dt + \frac{1}{4\epsilon} \normtlhg{c^2g + b \dot g}{2}{-1/2}^2,
\end{split}
\end{align}
where $\epsilon>0$ is chosen small enough so that  
\begin{align} \label{smallness_eps}
b -\epsilon\frac{\normtso{\dot a}{\infty}{2}+2 \ctr^2}{2}  >0 .
\end{align}

To get the next component of the desired energy estimate, we differentiate the semi-discrete problem with respect to time and test with $\ddot \xi$, which, 
after integration by parts in space, yields 
\begin{multline*}
\ints \left[ ( a(s)\dddot \xi ,\ddot \xi) +c^2 (\nabla \dot\xi, \nabla\ddot\xi) + b (\nabla \ddot \xi,  \nabla\ddot\xi) \right]\dt 
\\= - \ints (\dot a(s)\ddot \xi ,\ddot \xi) \dt+\ints (\dot f(s),\ddot\xi)\dt + \ints\intg (c^2 \dot{g}+b \ddot{g}) \ddot\xi\ds\dt.
\end{multline*}
We estimate the source term, for any $\epsilon_0>0$, as
\begin{align*}
\ints (\dot f(s),\ddot\xi)\dt \leq  \frac{1}{4\epsilon_0} \left(\ints \normso{\dot f(s)}{2} \dt\right)^2 + \epsilon_0 \sup_{\tau\in (0,t)} \normso{\ddot \xi(\tau)}{2}^2,
\end{align*}
We can then infer the following inequality: 
\begin{align*} 
\begin{split}
&\Mostafa{\frac{\ula}{2}}\normso{\ddot \xi(t)}{2}^2 + \frac{c^2}{2} \normso{\nabla \dot\xi(t)}{2}^2 + b \ints \normso{\nabla \ddot \xi}{2}^2 \dt 
\\ \leq \;&\Mostafa{\frac{\ola}{2}}\normso{\ddot \xi(0)}{2}^2  + \frac{c^2}{2} \normso{\nabla \dot\xi(0)}{2}^2 - 
\frac{1}{2}\ints (\dot a(s)\ddot \xi ,\ddot \xi) \dt \\&+ \frac{1}{4\epsilon_0} \left(\ints \normso{\dot f(s)}{2} \dt\right)^2 
+ \epsilon_0 \sup_{\tau\in (0,t)} \normso{\ddot \xi(\tau)}{2}^2
+ \ints\intg (c^2 \dot{g}+b \ddot{g}) \ddot\xi\ds\dt,
\end{split}
\end{align*}
where $\ddot \xi(0)$ can be estimated by testing \eqref{eq:linearized1} by $\ddot\xi$ at $t=0$:
\begin{align}\label{eq:xidd_estimate}
\frac{1}{2} \normso{\ddot\xi(0)}{2} \leq  c^2\normso{\laplace \xi(0)}{2} + b \normso{\laplace \dot \xi(0)}{2} + \normso{f(0)}{2}.
\end{align}
Notice that due to the embedding $W^{1,1}(0,T)\hookrightarrow C[0,T]$ (see \cite[Ch.\ 8, p.\ 214]{brezis2010functional}), the term $\normso{f(0)}{2}$ can be estimated with $\normfspe$.

\noindent We develop the rest of the estimates analogously to before and obtain
\begin{align} \label{eq:4th2nd}
\begin{split}
&\Mostafa{\frac{\ula}{2}} \normso{\ddot \xi(t)}{2}^2 + \frac{c^2}{2} \normso{\nabla \dot\xi(t)}{2}^2 \\
&+ \left(b -\epsilon\frac{\normtso{\dot a}{\infty}{2}+2 \ctr^2}{2} \right) \ints \normso{\nabla \ddot \xi}{2}^2 \dt 
\\ \leq \; & \frac{\ce(\epsilon)\normtso{\dot a}{\infty}{2} +2\epsilon \ctr^2}{2}\ints \normso{\ddot\xi}{2}^2\dt+  \Mostafa{\frac{\ola}{2}}\normso{\ddot\xi(0)}{2}^2 
\\& + \frac{c^2}{2} \normso{\nabla \dot\xi(0)}{2}^2 
+ \frac{1}{4\epsilon_0} \left(\ints \normso{\dot f(s)}{2} \dt\right)^2  +\epsilon_0 \sup_{\tau\in (0,t)} \normso{\ddot \xi(\tau)}{2}^2 
\\& + \frac{1}{4\epsilon} \normtlhg{c^2 \dot g + b \ddot g}{2}{-1/2}^2, 
\end{split}
\end{align}
with $\epsilon>0$ chosen again so that \eqref{smallness_eps} holds.

We can then combine \eqref{eq:lowenergy1} and \eqref{eq:4th2nd}, taking the supremum over $t \in (0, \tilde{t})$ for some $0<\tilde{t}\leq T$ which allows us to absorb the $\epsilon_0$ term and then use Gronwall's inequality to eliminate the energy terms form the right-hand side. Together with estimate \eqref{eq:xidd_estimate} and the fact that
\begin{align*}
\normso{\xi (\tilde{t})}{2} = \normso{\xi(0) + \int_0^{\tilde{t}} \dot \xi (s) \dt}{2} \leq \normso{\xi_0}{2} + \tilde{t}
\normtso{\dot \xi}{\infty}{2},
\end{align*}
we obtain the following bound on the energy norm:
\begin{equation}\label{eq:energynorm1}
\begin{aligned}
&\normho{\xi (\tilde{t})}{1} ^2 +\normho{\dot\xi(\tilde{t})}{1}^2 + \normso{\ddot \xi(\tilde{t})}{2}^2 
+ \int_0^{\tilde{t}} \normso{\nabla \ddot \xi(s)}{2}^2 \dt
\\ \lesssim& \;\begin{multlined}[t]  \normho{\psi_0}{2}^2 + \normho{\psi_1}{2}^2 + \normtso{f}{1}{2}^2  +  \normtso{\dot f}{1}{2}^2\\+ \normthg{g}{2}{-1/2}^2,
\end{multlined}
\end{aligned}
\end{equation}
a.e.
 in time, where the hidden constant is given by 
\begin{align}\label{eq:constant_expr}
C_1(1+\tilde{t})\exp\left(C_2(1+\normtso{\dot a}{\infty}{2})\tilde{t}\right) ,
\end{align}
where $C_1$ and $C_2$ are two positive constants. \\[2mm]
\noindent\textbf{Bootstrap argument.}
Inspired by the approach of, e.g., \cite[Lemmas 3.1 and 3.2]{garcke2018optimal}, we demonstrate, through a bootstrap argument, that a higher regularity is achieved.
To do so, we rewrite \eqref{eq:linearized1} as
\begin{empheq}{align*}
	\begin{array}{rl}
		 -  \laplace \left(c^2\xi +b \dot \xi \right) + c^2\xi +b \dot \xi = f - a(t)\ddot \xi + c^2\xi +b \dot \xi  & \qquad \mathrm{on\;} Q,\\
	c^2 \pdfrac{\xi }{n} + b \pdfrac{\dot \xi}{n} = c^2 g +b\dot g &\qquad \mathrm{on\;} \Sigma,\\
	\xi(0) = \xi_0,\; \dot\xi(0) = \xi_1&\qquad \mathrm{on\; \Omega}.
	\end{array}
\end{empheq}
Let $\Theta = c^2 \xi(t) + b\dot \xi(t)$ and $\tilde{f} = f(t) - a(t)\ddot \xi(t) + c^2\xi(t) +b \dot \xi(t)$. Then $\Theta$ solves 
\begin{empheq}{align*}
	\begin{array}{rl}
		 -  \laplace \Theta + \Theta = \tilde{f}  & \qquad \mathrm{on\;} \Omega,\\
	 \pdfrac{\Theta }{n} = c^2g+b\dot g &\qquad \mathrm{on\;} \bndry.\\
	\end{array}
\end{empheq}
By estimate \eqref{eq:energynorm1} and \cref{asu:asulin}, we infer that $\tilde{f} \in L^2(0,T;H^1)$. Recalling that $\Omega$ is of class $C^{2,1}$, elliptic regularity yields 
\begin{align*}
\normho{\Theta}{3} \lesssim \normho{\tilde{f}}{1} + \normhg{c^2g+b\dot g}{3/2};
\end{align*}
see~\cite[Theorem 2.5.1.1]{grisvard2011elliptic}. Since we know that $\xi$ solves the ODE 
$$ \dot \xi + \frac{c^2}{b} \xi = \frac{\Theta}{b},$$
we have 
\begin{align*}
\xi = e^{-c^2t/b} \left(\xi_0 + \ints e^{c^2s/b} \frac{1}{b} \Theta(s) \dt\right)
\end{align*}
and subsequently 
\begin{align*}
\normtho{\xi}{\infty}{3} \lesssim & \normho{\xi_0}{3} + \normtho{\Theta}{2}{3} \\
\lesssim & \normho{\xi_0}{3} + \normtho{\tilde{f}}{2}{1}+ \normtlhg{c^2g+b\dot g}{2}{3/2}.  
\end{align*}
Furthermore, 
\begin{align*}
\normtho{\dot \xi}{2}{3} =&\,  \frac{1}{b}\normtho{\Theta - c^2\xi}{2}{3} \\
\lesssim&\,  \normho{\xi_0}{3}   + \normtho{\tilde{f}}{2}{1}+ \normtlhg{c^2g+b\dot g}{2}{3/2},
\shortintertext{and similarly,}
\normtho{\dot \xi}{\infty}{2} =&\,  \frac{1}{b}\normtho{\Theta - c^2\xi}{\infty}{2} \\
\lesssim&\,  \normho{\xi_0}{2}   + \normtso{\tilde{f}}{\infty}{2}+ \normtlhg{c^2g+b\dot g}{\infty}{1/2}.
\end{align*}
Together, these estimates yield $\xi \in W$ and
\begin{multline*}
\left\| \xi\right\|_W^2
\lesssim  \normho{\psi_0}{3}^2 + \normho{\psi_1}{2}^2 + \normtho{f}{2}{1}^2 
+  \normtso{ \dot f}{1}{2}^2 \\+\normthg{g}{1}{3/2}^2 + \normthg{g}{2}{-1/2}^2 +\normtlhg{\dot g}{\infty}{1/2} .
\end{multline*}
With this uniform bound in hand, standard compactness arguments allow us to extract 
weakly convergent subsequences that verify the PDE and its initial conditions; see e.g.,~\cite{evans10,salsa2016partial}. We omit the details here.
\end{proof}
\noindent We next move on to the analysis of the Kuznetsov equation where we show that the aforementioned regularity and well-posedness results hold for the nonlinear equation, provided smallness of data, using a fixed-point argument. To this end, we define the operator 
\begin{align}\label{eq:fixptopdef}
	\mathrm{A}[\cdot]: \mathcal{W} \ni y \mapsto &\:\psi,
\end{align}
where $\psi$ is the solution of the initial boundary value problem 
\begin{empheq}{align*}
\begin{array}{rl}
(1-2k \dot y)\ddot \psi - c^2 \laplace \psi 
- b\laplace \dot \psi  = 2 \sigma \nabla \dot y \cdot \nabla y  & \qquad \mathrm{on\;} Q,\\
\pdfrac{\psi}{n} = g  &\qquad \mathrm{on\;} \Sigma,\\
\psi(0) = \psi_0,\; \dot\psi(0) = \psi_1&\qquad \mathrm{on\; \Omega},
\end{array}
\end{empheq}
and the ball $\mathcal{W}$ is defined by 
\begin{equation}\label{eq:Wballdef}
\begin{aligned}
\begin{multlined}[t]
\mathcal W  = \left\{\vphantom{\frac{1}{4\vline k\vline}} y \in W \right. :\,  
\normtso{\dot y}{\infty}{\infty} \leq \frac{1}{4\vert k \vert},\ \normtho{\nabla y}{\infty}{2}\leq r,\ \\ \normtho{\nabla \dot y}{\infty}{1}\leq r, \\
\;\normtho{\ddot y}{2}{1}\leq r,\;  \normtso{\ddot y}{\infty}{2}\leq r, \left.\; \normtso{\nabla \dot y}{2}{\infty} \leq r \vphantom{\frac{1}{4 \vline k \vline}}  \right\},
\end{multlined}
\end{aligned}
\end{equation}
for some $r>0$; recall that the space $W$ is defined in \eqref{def_W}. The choice of $r>0$ will be made below. Note that the set $\mathcal W$ is nonempty as $0 \in \mathcal W$.
\begin{theorem}\label{th:nonlinreg}
Let $T>0$. Under \cref{asu:ori}, there exists $r_0^2=r_0^2(\Omega, T)$, such that if
\begin{align*} 
\normho{\psi_0}{3}^2 + \normho{\psi_1}{2}^2 +\normthg{g}{1}{3/2}^2 + \normthg{g}{2}{-1/2}^2& \\+ \normtlhg{\dot g}{\infty}{1/2}^2 \leq r_0^2&,
\end{align*}
then the initial-boundary value problem  \eqref{pb:wellposedness} for the Kuznetsov equation has a unique solution $\psi \in W$. Furthermore, this solution satisfies 
\begin{align*}
\left\| \psi\right\|_W^2
\lesssim \; & \begin{multlined}[t]\normho{\psi_0}{3}^2 + \normho{\psi_1}{2}^2 +\normthg{g}{1}{3/2}^2 + \normthg{g}{2}{-1/2}^2 \\+ \normtlhg{\dot g}{\infty}{1/2}^2.\end{multlined}
\end{align*}
\end{theorem}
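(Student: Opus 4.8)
The plan is to realize $\psi$ as a fixed point of the map $\mathrm{A}[\cdot]$ from \eqref{eq:fixptopdef} on the ball $\mathcal W$ defined in \eqref{eq:Wballdef}, invoking \cref{th:linearized} at each evaluation of $\mathrm{A}$.

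\textbf{Well-definedness of $\mathrm{A}$ on $\mathcal W$.} First I would check that for $y\in\mathcal W$ the coefficient $a=1-2k\dot y$ and source $f=2\sigma\nabla\dot y\cdot\nabla y$ of the linear problem defining $\mathrm{A}[y]$ satisfy \cref{asu:asulin}. Non-degeneracy is immediate from the constraint $\normtso{\dot y}{\infty}{\infty}\le\frac{1}{4|k|}$, which forces $|2k\dot y|\le\tfrac12$ and hence $\ula=\tfrac12< a<\tfrac32=\ola$ (when $k=0$ non-degeneracy is trivial); the regularity $a\in L^\infty(0,T;H^2(\Omega))\cap L^2(0,T;H^3(\Omega))$ and $\dot a=-2k\ddot y\in L^\infty(0,T;L^2(\Omega))$ follows from $y\in W$. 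The delicate point is \cref{subasu:fspace} for the gradient nonlinearity: using Hölder's inequality together with $H^2(\Omega)\hookrightarrow L^\infty(\Omega)$ (valid for $l\le 3$) and the bounds in \eqref{eq:Wballdef}, I would show $\normtho{f}{2}{1}\lesssim r^2$ and $\normtso{\dot f}{1}{2}\lesssim r^2$, where $\dot f=2\sigma(\nabla\ddot y\cdot\nabla y+|\nabla\dot y|^2)$. Here the two less standard constraints in $\mathcal W$ are exactly what is needed: $\normtso{\nabla\dot y}{2}{\infty}\le r$ controls the term $\nabla\dot y\cdot\nabla^2 y$ in $\normtho{f}{2}{1}$, while $\normtho{\ddot y}{2}{1}\le r$ controls $\nabla\ddot y\cdot\nabla y$ in $\normtso{\dot f}{1}{2}$. \cref{th:linearized} then gives $\mathrm{A}[y]\in W$.

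\textbf{Self-mapping.} Next I would verify $\mathrm{A}[\mathcal W]\subseteq\mathcal W$. Restricting a priori to $r\le 1$ keeps the Gronwall constant \eqref{eq:constant_expr} uniformly bounded over $\mathcal W$, since on $\mathcal W$ one has $\normtso{\dot a}{\infty}{2}=2|k|\,\normtso{\ddot y}{\infty}{2}\le 2|k|r$. The energy estimate \eqref{eq:energynormth} then reads $\|\mathrm{A}[y]\|_W^2\lesssim r_0^2+Cr^4$ with $C$ independent of the particular $y$. Every defining inequality of $\mathcal W$ except the $\tfrac{1}{4|k|}$-bound is of the form (sub-norm)$\le r$ with the sub-norm dominated by $\|\mathrm{A}[y]\|_W$, so it suffices to arrange $r_0^2+Cr^4\le r^2$: I would first fix $r$ small enough that $Cr^2\le\tfrac12$, then take $r_0^2\le\tfrac12 r^2$. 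The remaining bound $\normtso{\dot\psi}{\infty}{\infty}\lesssim\normtho{\dot\psi}{\infty}{2}\le\|\mathrm{A}[y]\|_W$, again via $H^2\hookrightarrow L^\infty$, is brought below $\tfrac{1}{4|k|}$ by shrinking $r$ and $r_0$ further if needed, which fixes $r_0^2=r_0^2(\Omega,T)$.

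\textbf{Contraction and conclusion.} For $y_1,y_2\in\mathcal W$ with $\psi_i=\mathrm{A}[y_i]$, the difference $w=\psi_1-\psi_2$ solves the linearized equation with coefficient $a_1=1-2k\dot y_1$, homogeneous data, and right-hand side $F=2\sigma\big(\nabla(\dot y_1-\dot y_2)\cdot\nabla y_1+\nabla\dot y_2\cdot\nabla(y_1-y_2)\big)+2k(\dot y_1-\dot y_2)\ddot\psi_2$. A lower-order energy estimate (testing with $\dot w$, as in the proof of \cref{th:linearized}) yields $\|w\|_{\mathcal E}\lesssim r\,\|y_1-y_2\|_{\mathcal E}$ in a lower-order energy norm $\mathcal E$ controlling $\nabla w,\dot w$ in $L^\infty(L^2)$ and $\nabla\dot w$ in $L^2(L^2)$; each summand of $F$ contributes exactly one factor $r$ by the $\mathcal W$-bounds on $y_i$ and the self-mapping bound on $\psi_2$. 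Shrinking $r$ makes $\mathrm{A}$ a strict contraction on $\mathcal W$ equipped with $\|\cdot\|_{\mathcal E}$, which is a complete metric space because $\mathcal W$ is bounded in $W$ and therefore weak-$*$ sequentially closed in the weaker topology (Banach--Alaoglu together with weak-$*$ lower semicontinuity of the norms in \eqref{eq:Wballdef}). Banach's fixed-point theorem then supplies a unique $\psi\in\mathcal W\subseteq W$ solving \eqref{pb:wellposedness}, and the asserted energy bound is inherited from \eqref{eq:energynormth} with $f=2\sigma\nabla\dot\psi\cdot\nabla\psi$, whose contribution $\lesssim\|\psi\|_W^4$ is absorbed into the left-hand side by smallness. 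The main obstacle, and the place where the Kuznetsov gradient term makes itself felt, is precisely the pair of bounds $f\in L^2(H^1)\cap W^{1,1}(L^2)$ that dictate the unusual constraints $\normtso{\nabla\dot y}{2}{\infty}\le r$ and $\normtho{\ddot y}{2}{1}\le r$; a secondary difficulty is that, $T$ being fixed, the contraction factor must be extracted solely from the smallness of $r$, so the ordering ``choose $r$, then $r_0$'' must remain compatible with the $a$-dependence of the constant \eqref{eq:constant_expr}.
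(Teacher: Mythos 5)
Your proposal is correct and follows essentially the same route as the paper: Banach's fixed-point theorem for the map $\mathrm{A}[\cdot]$ on the ball $\mathcal W$, with well-definedness and self-mapping supplied by \cref{th:linearized} and the $r^4$-bounds on $f=2\sigma\nabla\dot y\cdot\nabla y$, and contractivity in a topology strictly weaker than $W$. The only (harmless) deviation is that you contract in a lower-order energy norm obtained by testing with $\dot w$ alone, whereas the paper works in $\hat X=H^2(0,T;L^2(\Omega))\cap W^{1,\infty}(0,T;H^1(\Omega))$ and also tests with $\ddot w$; your version still closes because the troublesome term $2k(\dot y_1-\dot y_2)\ddot\psi_2$ can be handled in $L^1(0,T;L^2(\Omega))$ via $L^2(H^1)\times L^2(H^1)$ rather than the paper's $L^\infty(H^1)\times L^2(H^1)$ pairing.
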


\begin{proof}
The proof follows by employing the Banach fixed-point theorem on the mapping $\mathrm{A}[\cdot]$.  We can use the existence and uniqueness result of \cref{th:linearized} to conclude that $\mathrm{A}[\cdot]$ is well defined by setting \[y \in W,\qquad f =  2\sigma \nabla \dot y \cdot \nabla y, \qquad a(t) = 1-2k\dot y.\] 
We see that $\nabla a \in L^2(0,T;H^1(\Omega))$ and $\dot a \in L^2(0,T;H^1(\Omega)) \cap L^\infty(0,T;L^2(\Omega))$. 
\Mostafa{Moreover, with the choice of the ball $\mathcal W$, $ \ula \leq a(t) \leq \ola$ is verified with $\ula = \frac12$ and $\ola = \frac32$.}
We next verify the regularity assumption on $f$ in \cref{th:linearized}. Denote by $\mathrm{H}(\cdot)$ the Hessian matrix operator of a scalar field. Then 
\begin{align}\label{eq:est_L2_f}
\begin{split}
&\ints \normho{f}{1}^2 \dt \\
 =\;& 4\sigma ^2 \ints \normso{\nabla \dot y \cdot\nabla y }{2}^2 \dt + 4\sigma ^2 \ints \normso{\nabla(\nabla \dot y\cdot\nabla y) }{2}^2 \dt
\\\lesssim\; &\begin{multlined}[t] \normtho{\nabla \dot y}{\infty}{1}^2 \normtho{\nabla y}{2}{1}^2 + \normtso{\nabla y}{2}{\infty}^2 \normtso{\mathrm{H}(\dot y)}{\infty}{2}^2 \\
+
\normtho{\nabla \dot y}{\infty}{1}^2 \normtho{\mathrm{H}(y)}{2}{1}^2. \end{multlined}
\end{split}
\end{align}
Similarly, we have 
\begin{align}\label{eq:est_L1_dotf}
\begin{split}
 \normtso{ \dot f}{1}{2}^2
\lesssim\normtho{\nabla \dot y}{2}{1}^4+\normtso{\nabla \ddot y}{2}{2}^2\normtso{\nabla y}{2}{\infty}^2. 
\end{split}
\end{align}
Therefore, there exists $C=C(t)>0$, such that
\begin{align} \label{est_f}
\ints \normho{f(s)}{1}^2 \dt + \left(\ints \normso{ \dot f(s)}{2}\dt \right)^2 \leq Cr^4.
\end{align}
Thus, \eqref{eq:fixptopdef} is well defined, provided $\normtso{\dot y}{\infty}{\infty} \leq \frac{1}{4\vert k \vert}$.
Next we demonstrate that $\mathcal{W}$ is invariant under a smallness condition on the problem data. By the energy bound \eqref{eq:energynormth}, there exists $C_3 =C_3(r)>0$, such that
\begin{multline*}
\|A[y]\|_W^2 \leq C_3  \left(\normho{\psi_0}{3}^2 + \normho{\psi_1}{2}^2 +  \normtho{f(s)}{2}{1}^2 \right. 
+ \normtso{ \dot f(s)}{1}{2}^2\\+\normthg{g}{1}{3/2}^2 + \normthg{g}{2}{-1/2}^2 +\left. \vphantom{\intt}\normtlhg{\dot g}{\infty}{1/2}^2\right).
\end{multline*}
We can estimate the norm terms involving $f$ as in \eqref{est_f}. Together, this yields that 
$$\|A[y]\|_W^2 \lesssim r_0^2 +r^4 .
$$
On account of the embedding $H^2(\Omega) \hookrightarrow L^\infty(\Omega)$,  the semi-norm $\vert\cdot\vert_{\mathcal{W}}$ defining the ball $\mathcal{W}$, is weaker than the norm $\|\cdot\|_W$. Therefore, there exists $C_4(r)>0$ depending continuously on $r$ (cf. \eqref{eq:constant_expr}), such that 
$$\vert A[y]\vert_{\mathcal{W}}^2 \leq C_4(r) (r_0^2 +r^4).$$
Since $C_4(r)>0$ depends continuously on $r$, for $r$ small enough, we have 
$$0<\frac12 C_4(0)<C_4(r)<\frac32 C_4(0).$$
It can then be readily shown that the set $\mathcal{W}$ is invariant under $A[\cdot]$ for $r_0$ sufficiently small if we impose additionally
$$r^2 < \frac{2}{3C_4(0)}.$$ 
Indeed, for $r_0 \leq \sqrt{\frac{2r^2}{3C_4(0)}-r^4}$, we have $\frac32C_4(0) \left(r_0^2 +r^4 \right)< r^2$. 

Next we prove the strict contractivity of the operator $\mathrm{A}[\cdot]$ in $$\hat{X} =H^2(0,T;L^2(\Omega)) \cap W^{1,\infty}(0,T;H^1(\Omega));$$
that is, in a weaker topology compared to the one of the solution space. Let $y$, $\tilde y \in W$ and let $\psi = \mathrm{A}[y]$ and $\tpsi = \mathrm{A}[\tilde{y}]$. We define $z = \psi-\tpsi$, which verifies 
\begin{empheq}{align}\label{eq:contractivity}
	\begin{array}{rl}
		 a(t) \ddot z- c^2 \laplace z  - b \laplace\dot z =  f - \tilde{f} - (a -\tilde{a}) \ddot \tpsi   & \qquad \mathrm{on\;} Q,\\[1mm]
	\pdfrac{z}{n} = 0 &\qquad \mathrm{on\;} \Sigma,\\[1mm]
	z(0) = 0,\; \dot z(0) = 0&\qquad \mathrm{on\; \Omega}.
	\end{array}
\end{empheq}
We test \eqref{eq:contractivity} with $\dot z$ and integrate in time over $(0,t)$ to obtain, analogously to estimate \eqref{eq:lowenergy1},
\begin{multline}\label{eq:contraction1}
\normso{\dot z(t)}{2}^2 +\normso{\nabla z(t)}{2}^2+\ints \normso{\nabla\dot z}{2}^2 \dt 
\\ \leq\;  C_5(r) \left(\normtso{f-\tilde{f}}{2}{2}^2  + \normtso{(a -\tilde{a}) \ddot\tpsi}{2}{2}^2\right).
\end{multline}
Next we test \eqref{eq:contractivity} with $\ddot z$ and integrate by parts in time and space to obtain
\begin{multline*}
\ints \left[ (a(s)\ddot z ,\ddot z) + b (\nabla \dot z, \nabla \ddot z) \right]\dt =\;  c^2 \ints \normso{\nabla \dot z}{2}^2 \dt -  c^2 (\nabla z (t), \nabla \dot z(t))\\
 + \ints (f - \tilde{f} - (a -\tilde{a}) \ddot \tpsi , \ddot z)\dt.
\end{multline*}
We estimate the term
\begin{align} \label{eq:c2_finalt_trick}
\begin{split}
c^2 \vert(\nabla z (t), \nabla \dot z(t)) \vert \leq\; &  \frac{1}{4\epsilon} \normso{\nabla z (t)}{2}^2  +  \epsilon \normso{\nabla \dot z (t)}{2}^2, 
\end{split}
\end{align} 
and the term
\begin{align*} 
\begin{split}
\ints \vert(f - \tilde{f} - (a -\tilde{a}) \ddot \tpsi , \ddot z)\vert \dt \leq\; & \frac{1}{4\epsilon} \normtso{f-\tilde{f}}{2}{2}^2  + \frac{1}{4\epsilon} \normtso{(a -\tilde{a}) \ddot\tpsi}{2}{2}^2 \\ 
&+ \epsilon \ints \normso{\ddot z}{2}^2  \dt 
\end{split}
\end{align*} 
The terms $\ints \normso{\nabla \dot z}{2}^2 \dt$ and $\normso{\nabla z (t)}{2}^2$ are directly estimated by \eqref{eq:contraction1}.
We choose $\epsilon$ as small as needed to absorb the terms $\normso{\nabla \dot z(t)}{2}^2$ and $\ints\normso{\ddot z}{2}^2 \dt$ by the left-hand side.
Using the fact that 
\begin{align*}
\normso{z (t)}{2}^2 \leq t^2 \normtso{\dot z}{\infty}{2}^2, \quad t \in [0,T],
\end{align*} 
we combine the resulting estimate with \eqref{eq:contraction1} to conclude that there exist $C_6=C_6(r)>0$, such that 
\begin{align*}
\begin{split}
\normx{z}^2 \leq C_6 \left(\normtso{f-\tilde{f}}{2}{2}^2  + \normtso{(a -\tilde{a}) \ddot\tpsi}{2}{2}^2  \right).
\end{split}
\end{align*}
We know that 
\begin{align*}
\begin{split}
\normtso{(a -\tilde{a}) \ddot \tpsi}{2}{2}^2 
\leq & 4 k^2 \co^2 \normtho{\dot y - \dot{\tilde{y}}}{\infty}{1}^2 \normtho{\ddot \tpsi}{2}{1}^2,
\end{split}
\end{align*}
and that
\begin{align*}
\begin{split}
\normtso{f-\tilde{f}}{2}{2}^2 =\; & 4\sigma^2 \normtso{\nabla (y-\tilde{y}) \cdot \nabla \dot y + \nabla (\dot {y} - \dot{\tilde{y}})\cdot \nabla \tilde{y}}{2}{2}^2 \\
\leq\; & 8 \sigma^2 \normtso{\nabla (y-\tilde{y})}{\infty}{2}^2 \normtso{\nabla \dot y}{2}{\infty}^2  \\& +
 8\sigma^2 \normtso{\nabla (\dot {y} - \dot{\tilde{y}})}{2}{2}^2 \normtso{\nabla \tilde{y}}{\infty}{\infty}^2.
\end{split}
\end{align*}
Together, this gives us that 
\begin{align*}
\normx{ \mathrm{A}[y]- \mathrm{A}[\tilde{y}]}^2 \lesssim \left(\normtho{\ddot \tpsi}{2}{1}^2 +
  \normtso{\nabla \dot y}{2}{\infty}^2 
+ \normtso{\nabla \tilde{y}}{\infty}{\infty}^2 \right) \normx{y-\tilde{y}}^2.
\end{align*}
Since $\tpsi\in\mathcal{W}$, we have 
$  \normtho{\ddot \tpsi}{2}{1}^2 \leq r^2,$
and we conclude that there exists $\tilde C  = \tilde C(r) >0$ (we deal with the dependence on $r$, similarly to how we did earlier in this proof for $C_4(r)$)
\begin{align*}
\normx{ \mathrm{A}[y]- \mathrm{A}[\tilde{y}]}^2 \leq  \tilde C r^2 \normx{y-\tilde{y}}^2,
\end{align*}
which shows that $\mathrm{A}[\cdot]$ is a strict contraction if 
\(
\tilde Cr^2<1.
\)

Following a similar argument to \cite[Theorem 2]{kaltenbacher2016shape}, it can be shown that the set $\mathcal{W}$ is closed in $\hat X$. Banach's fixed-point theorem then ensures the existence and uniqueness of a solution $\psi \in \mathcal {W}$ of \eqref{pb:wellposedness}. The energy estimate is obtained by using the linear estimate \eqref{eq:energynormth}, the estimates \eqref{eq:est_L2_f} and \eqref{eq:est_L1_dotf} for the source term $f$, and Gronwall's inequality. In a similar manner, the uniqueness in $W$ follows.
\end{proof}

\subsection{Shape deformations and uniform well-posedness}
We next introduce a family of admissible perturbations \(\{\Omega_d\}\)
of the reference domain $\Omega  \in C^{2,1}$ such that $\overline\Omega \subset U$, where $U$ is a fixed bounded hold-all domain in $\Rs^l$ with $l\in \{2,3\}$; see \Cref{fig:setup}. To this end, we employ the approach of \cite{murat1976controle}, 
and perturb the identity mapping with a vector field 
\[h \in \mathcal{D} = \left\{ h\in C^{2,1}\left(\overline U, \Rs^l\right) \;\vline \; h\vert_{\partial U} = 0\right\}.\]
For $h \in \mathcal{D}$ and $d\in \Rs$, such that $\absd$ is sufficiently small, we define
\[F_d = \mathrm{id} + dh.\]
Then there exists $\delta >0$, such that \(F_d(U) = U\) and that $F_d$ is a diffeomorphism for $\absd< \delta$. 
The perturbed domains and boundary are defined as
\[\Omega_d = F_d(\Omega), \qquad \bndry_d = F_d(\bndry).\]
We next employ the method of mappings to transform 
the perturbed state constraint to the reference domain $\Omega$.
We use the notation \[
\varphi_d : \Omega_d \rightarrow \Rs
\]
for quantities defined on the perturbed domain. Furthermore, we denote 
\[
\varphi^d = \varphi_d \circ F_d : \Omega \rightarrow \Rs. 
\]
If $\psi_d$ is the solution of problem \eqref{pb:wellposedness} on a deformed domain $\Omega_d$, then \[\psi^d = \psi_d \circ F_d : \Omega \rightarrow \Rs,\]
satisfies an initial-boundary value problem on the reference domain $\Omega$. To state the weak form solved by $\psi^d$, we introduce the short-hand notation 
\begin{align*}
\begin{array}{lcl}
I_d = \det DF_d = \det(\mathrm{I} + d\,\nabla h ^T),& \qquad & A_d = (DF_d)^{-T}, \\[1mm]
w_d = I_d \vert A_d n \vert, & & M_d = I_d A_d^T A_d,
\end{array}
\end{align*}
where $DF_d$ is the Jacobian of $F_d$ and $n$ denotes the outer normal unit vector to $\Omega$. 
Furthermore, we denote:
\begin{align} \label{eq:smoothdefM}
\Mostafa{\left(M_d\right)'(0)} = M = I \div h - \nabla h - (\nabla h)^T,
\end{align}
\Mostafa{where $(\cdot)'$ stands for the derivative with respect to the shape deformation variable $d$.}
The limits that define the derivative at $d=0$ exist uniformly in $\overline{U}$; see~\cite{ito2006variational}. For the upcoming analysis, we require certain known regularity properties of the transformation $F_d$ and the transformation of integral formulas. We collect these in Appendix~\ref{ap:defprop} for convenience of the reader.\\
\indent With the above notation, using the transformation of integrals, we find that the function $\psi^d = \psi_d \circ F_d$ solves
\begin{align}\label{eq:holder2}
\begin{split} 
 &\int_{\Omega} I_d (1-2k\dot\psi^d)\ddot \psi^d \phi \dx \\
 & +  \int_{\Omega} \left(c^2 M_d \nabla \psi^d \cdot \nabla \phi + b M_d\nabla \dot \psi^d \cdot \nabla \phi 
		- 2 \sigma(M_d\nabla\dot\psi^d \cdot \nabla\psi^d)\phi\right)\dx  \\
		= \; &  \int_{\bndry} w_d \left(c^2 g^d + b \dot g^d\right) \phi \ds
\end{split}
\end{align}
for all $\phi \in H^1(\Omega)$ a.e.\ in time, with initial conditions $\psi^d (0) = \psi_0^d$, $\psid^d (0) = \psi_1^d$. By generalizing the arguments of \cref{th:linearized} and \cref{th:nonlinreg}, we can prove that this problem has a unique solution as well, which remains uniformly bounded in $\|\cdot\|_{W}$. 
\begin{theorem}\label{prop:uniformwellp}
Given $T>0$, let \cref{asu:ori} hold and let $\absd$ be small enough. There exists $r_1^2=r_1^2(\Omega, T)$, independent of $d$, such that if
\begin{multline*} 
\normho{\psi_0^d}{3}^2 + \normho{\psi_1^d}{2}^2 +\normthg{g^d}{1}{3/2}^2 \\ + \normthg{g^d}{2}{-1/2}^2 + \normtlhg{\dot g^d}{\infty}{1/2}^2 \leq r_1^2,
\end{multline*}
then problem \eqref{eq:holder2} has a unique solution $\psi^d \in W$. Furthermore, the solution verifies the estimate 
\begin{align*}
\left\| \Mostafa{\psi^d}\right\|_W^2
\leq \; & C^*(\Omega, T, r_1),
\end{align*}
where the constant $C^*=C^*(\Omega, T, r_1)$ is independent of $d$. 
\end{theorem}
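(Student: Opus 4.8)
The plan is to retrace the proofs of \cref{th:linearized,th:nonlinreg} for the variable-coefficient problem \eqref{eq:holder2}, tracking the dependence of every constant on the deformation parameter $d$ and showing it remains controlled. The starting point is a collection of \emph{uniform} bounds on the geometric coefficients. Since $F_d = \mathrm{id} + dh$ with $h \in \mathcal{D} \subset C^{2,1}(\overline U, \Rs^l)$, the Jacobian $DF_d$ depends smoothly on $d$, and the maps $I_d$, $A_d$, $M_d$, $w_d$ are \emph{time-independent}. Using the regularity properties collected in Appendix~\ref{ap:defprop} (cf.~\cite{ito2006variational}), for $\absd<\delta$ there exist constants independent of $d$ such that $M_d$ is uniformly elliptic and bounded, $0 < \underline{\iota} \le I_d \le \overline{\iota}$ pointwise, $w_d$ is bounded, and $\|M_d\|_{C^{1,1}(\overline\Omega)}$, $\|I_d\|_{C^{1,1}(\overline\Omega)}$ are bounded, all uniformly for $\absd<\delta$. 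These structural facts are the only information about the deformation that enters the estimates.

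With these bounds in hand, I would first establish the analogue of \cref{th:linearized} for the linearized deformed problem, in which the scalar coefficient $a(t)$ of \eqref{eq:linearized1} is replaced by $I_d\,a(t)$ in the inertial term and the Laplacian by the divergence-form operator $\div(M_d \nabla\,\cdot\,)$ in the principal part, with boundary datum $w_d(c^2 g^d + b\dot g^d)$. The principal-part coefficients $c^2 M_d$ and $b M_d$ are time-independent, exactly as $c^2$ and $b$ were, so testing with $\dot\xi$ (and, after differentiation in time, with $\ddot\xi$) reproduces the energy identity \eqref{eq:energy_id_1} for the inertial term while the principal terms integrate cleanly to $\tfrac12\frac{\diff}{\diff t}(M_d\nabla\xi\cdot\nabla\xi)$ and to the coercive quantity $M_d\nabla\dot\xi\cdot\nabla\dot\xi$, with no extra contribution. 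Uniform ellipticity of $M_d$ supplies coercivity of the gradient and damping terms; the composite coefficient $a = I_d(1-2k\dot y)$ obeys uniform bounds (from $\dot y$ confined by the ball and $I_d\in[\underline{\iota},\overline{\iota}]$), playing the role of $\ula$, $\ola$; and $\dot a = -2kI_d\ddot y$ is handled as before since $I_d$ is bounded. The boundary term is controlled through the trace estimate using only boundedness of $w_d$. Thus \eqref{eq:lowenergy1} and its second-order counterpart \eqref{eq:4th2nd} hold with constants depending only on the uniform coefficient bounds, and Gronwall's inequality yields \eqref{eq:energynorm1} with a hidden constant independent of $d$.

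For the bootstrap I would rewrite the linearized form of \eqref{eq:holder2} as an elliptic equation for $\Theta = c^2\xi + b\dot\xi$, now of divergence form $-\div(M_d\nabla\Theta) + \Theta = \tilde f$ with a conormal boundary datum proportional to $w_d$. Since $M_d$ has $C^{1,1}$ entries and is uniformly elliptic on the fixed $C^{2,1}$ domain $\Omega$, elliptic regularity produces an $H^3$ bound on $\Theta$ whose constant depends only on the ellipticity constant and $\|M_d\|_{C^{1,1}}$, hence is uniform in $d$. Reconstructing $\xi$ from $\Theta$ via the unchanged ODE $\dot\xi + (c^2/b)\xi = \Theta/b$ then gives the uniform $W$-bound \eqref{eq:energynormth}. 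Finally I would run the Banach fixed-point argument of \cref{th:nonlinreg} on the operator $y\mapsto\psi^d$ defined through the deformed linearized problem with $a(t)=I_d(1-2k\dot y)$ and source $f = 2\sigma M_d\nabla\dot y\cdot\nabla y$, on the same ball $\mathcal{W}$. The self-mapping and contraction estimates reproduce those of \eqref{est_f}--\eqref{eq:contraction1} with $M_d$-weighted sources; as all intervening constants depend only on the uniform coefficient bounds, the smallness threshold yielding $r_1$ and the resulting bound $C^*$ are independent of $d$.

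The main obstacle is to guarantee that the two places where constants could a priori degenerate as $d$ varies remain uniform. First, the uniform ellipticity and $C^{1,1}$-regularity of $M_d$, and the attendant uniformity of the elliptic-regularity constant in the bootstrap step: this is where the smoothness $h\in C^{2,1}$ and the smooth dependence of $F_d$ on $d$ are essential. Second, the gradient nonlinearity now appears $M_d$-weighted, so in estimating $f = 2\sigma M_d\nabla\dot y\cdot\nabla y$ and $\dot f$ extra spatial derivatives fall on $M_d$; these must be absorbed using the uniform bound on $\|M_d\|_{C^{1,1}}$ so that \eqref{eq:est_L2_f}--\eqref{eq:est_L1_dotf} retain $d$-independent constants. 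Once both points are secured, the radius $r_1$ and the final bound $C^*$ follow uniformly, exactly as in the undeformed case.
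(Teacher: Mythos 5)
Your proposal is correct and follows essentially the same route as the paper: the paper likewise reduces the claim to the uniform positivity of $M_d$ and the uniform $C^{1,1}(\overline\Omega)$-bounds on $M_d$ and $w_d$ for $\absd$ small, then generalizes the linear energy estimates, the elliptic-regularity bootstrap, and the fixed-point argument of \cref{th:linearized,th:nonlinreg} with $d$-independent constants. Your added attention to the two potential degeneracies (the elliptic-regularity constant for the divergence-form operator and the $M_d$-weighted nonlinearity) fills in details the paper leaves implicit but does not change the argument.
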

\begin{proof}
We can express \eqref{eq:holder2} in strong form as follows:
\begin{empheq}{align*}
	\begin{array}{rl}
	 I_d \left(1-2k\dot \psi^d \right)\ddot \psi^d -  \nabla\cdot\left( M_d (c^2 \nabla\psi^d + b \nabla\psid^d)\right)  
	- 2\sigma M_d\nabla\dot\psi^d\cdot\nabla\psi^d= 0 &\quad \mathrm{on\;} Q, \\
	M_d\left(c^2 \nabla \psi^d + b \nabla \psid^d\right)\cdot n =  w_d(c^2 g^d + b \dot g^d) &\quad \mathrm{on\;} \Sigma, \\
	\psi^d(0) = \psi^d_0,\; \dot\psi^d(0) = \psi^d_1  &\quad \mathrm{on\; \Omega},\\
	\end{array}
\end{empheq}
so we can exploit the results of \cite[Chapter 6]{murat1976controle}, where a steady-state counterpart of this problem is considered. 

Note that $M_d(x)$ is uniformly positive for $x \in \Omega$ according to, e.g, \cite{ito2006variational} and \cite[Proposition 2.12]{sturm2015shape}. Further, $\|M_d\|_{C^{1,1}(\overline{\Omega})}$ and $\|w_d\|_{C^{1,1}(\partial \Omega)}$ are uniformly bounded for $\absd$ small enough thanks to the smoothness properties of the transformation $F_d$; see Appendix~\ref{ap:defprop}. Thus, one can readily show that the energy estimates for the above problem (that can be established analogously to those in \cref{th:linearized}) can be made independent of $d$ for $\absd$ small enough.

\indent By generalizing the linear and fixed-point arguments of \cref{sec:wellp} (noting that elliptic regularity \cite[Theorem 2.5.1.1]{grisvard2011elliptic} can again be used thanks to the aforementioned regularity of the shape deformation quantities), 
one can show that \eqref{eq:holder2} has a unique solution in $W$ under a smallness bound on $r_1$, which is independent of $d$.
\end{proof}
\section{H\"older-continuity with respect to shape} \label{sec:Holder_cont}
In this section, we establish H\"older-continuity of the solution to the nonlinear state problem with respect to shape deformations. This result is crucial in the rigorous justification of the shape derivatives.
\begin{theorem} \label{th:Holder} 
Let $T >0$  
and assume 
$\psi_0^d$, $\psi_1^d$ and $g^d$ verify the assumptions of \cref{prop:uniformwellp} uniformly in $d$ for $\absd$ small enough. 
Furthermore, assume H\"older-continuity of data with respect to $d$ in the following sense:
\begin{enumerate}[label=\roman*)]
 \item Initial data 
\[\lim_{d\rightarrow 0} \frac{1}{d} \normho{\psi_0^d - \psi_0 }{1}^2  = 0, \qquad \lim_{d\rightarrow 0} \frac{1}{d} \normso{\psi_1^d - \psi_1 }{2}^2  = 0;\]
  \item Boundary data 
\[\lim_{d\rightarrow 0} \frac{1}{d} \normthg{g^d - g }{1}{-1/2}^2  = 0.\]
\end{enumerate} 
Then the solution of the Kuznetsov problem \eqref{pb:wellposedness} is H\"older-continuous with respect to shape deformations in the sense of
\[\lim_{d\rightarrow 0} \frac{1}{d} \left\| \psi^d - \psi \right\|^2_X  = 0,\]
where $ X= W^{1,\infty}(0,T; L^2(\Omega) \cap H^1(0,T; H^1(\Omega))$. 
\end{theorem}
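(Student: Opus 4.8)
The plan is to derive the initial–boundary value problem satisfied by the difference $z := \psi^d - \psi$, where $\psi = \psi^0$ is the solution on the reference domain (so that $I_0 = 1$, $M_0 = \mathrm{I}$, $w_0 = 1$, $g^0 = g$), and then close a single lower-order energy estimate of the kind used in \cref{th:linearized}. Subtracting the weak form \eqref{eq:holder2} at parameter $d$ from the same identity at $d=0$ and regrouping, $z$ solves, for all $\phi \in H^1(\Omega)$ and a.e.\ in time,
\begin{equation*}
\begin{split}
&\int_\Omega a^d \ddot z\, \phi \dx + \int_\Omega M_d\left(c^2 \nabla z + b \nabla \dot z\right)\cdot \nabla \phi \dx \\
&\quad - 2\sigma \int_\Omega M_d\left(\nabla \dot z \cdot \nabla \psi^d + \nabla \dot\psi \cdot \nabla z\right)\phi \dx = \langle \mathcal F, \phi\rangle,
\end{split}
\end{equation*}
with $a^d = I_d(1-2k\dot\psi^d)$, initial data $z(0) = \psi_0^d - \psi_0$ and $\dot z(0) = \psi_1^d - \psi_1$, and a functional $\mathcal F$ that gathers all mismatch terms. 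These are of two kinds: \emph{coefficient-difference} terms, built from $(I_d - 1)$, $(M_d - \mathrm{I})$, $(w_d - 1)$ multiplied by space/time derivatives of the \emph{reference} solution $\psi$, together with one term proportional to $I_d\,\dot z\,\ddot\psi$ arising from decomposing $(a^d - a)\ddot\psi$; and \emph{data-difference} terms on the boundary, namely $w_d\big(c^2(g^d-g) + b(\dot g^d - \dot g)\big)$.

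First I would establish the energy estimate by testing with $\dot z$ and using the identity \eqref{eq:energy_id_1} with the coefficient $a^d$, which is uniformly bounded above and below for $\absd$ small by \cref{prop:uniformwellp}. Since $M_d$ is time-independent and uniformly positive definite, the left-hand side controls $\normso{\dot z(t)}{2}^2$, $\normso{\nabla z(t)}{2}^2$ and the dissipation $b\int_0^t \normso{\nabla \dot z}{2}^2\dt$. Every right-hand term carrying $\nabla\dot z$ — the $b$-part of the divergence-form source and the gradient-nonlinearity term $2\sigma\int_\Omega M_d(\nabla\dot z\cdot\nabla\psi^d)\dot z$ — is absorbed into this dissipation via Young's inequality and \eqref{eq:usefulineq}; the crucial point is the uniform bound $\|\nabla\psi^d\|_{L^\infty(0,T;L^\infty(\Omega))}\lesssim 1$, from $\psi^d \in W$ and $H^3(\Omega)\hookrightarrow W^{1,\infty}(\Omega)$. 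The self-coupling term proportional to $\int_\Omega I_d\,\ddot\psi\,\dot z^2$ is handled as a Gronwall term using $\ddot\psi \in L^\infty(0,T;L^2(\Omega))$ and Ehrling's inequality, while the boundary contributions use the trace bound $\normhg{\dot z}{1/2}\le \ctr\normho{\dot z}{1}$ and $H^{-1/2}$–$H^{1/2}$ duality, exactly as in \eqref{eq:lowenergy1}. Choosing the absorption parameters small and applying Gronwall's inequality then yields
\begin{equation*}
\begin{split}
\|z\|_X^2 \lesssim\;& \normho{\psi_0^d - \psi_0}{1}^2 + \normso{\psi_1^d - \psi_1}{2}^2 \\
&+ \normthg{g^d - g}{1}{-1/2}^2 + \|\mathcal F_{\mathrm{coeff}}\|_*^2,
\end{split}
\end{equation*}
where $\|\mathcal F_{\mathrm{coeff}}\|_*^2$ is the squared dual norm of the coefficient-difference part of $\mathcal F$.

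It then remains to quantify the source. From the smoothness of $F_d$ recorded in Appendix~\ref{ap:defprop}, together with $I_0 = 1$, $M_0 = \mathrm{I}$, $w_0 = 1$, one has $\|I_d - 1\|_{C^{1,1}(\overline\Omega)} + \|M_d - \mathrm{I}\|_{C^{1,1}(\overline\Omega)} \lesssim \absd$ and $\|w_d - 1\|_{C^{1,1}(\bndry)} \lesssim \absd$ for $\absd$ small. Pairing these $\mathcal O(\absd)$ factors with the derivatives of $\psi$, which are bounded in $W$ uniformly in $d$ by \cref{prop:uniformwellp}, gives $\|\mathcal F_{\mathrm{coeff}}\|_*^2 \lesssim d^2$. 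Dividing the energy estimate by $d$: the three data terms tend to $0$ by hypotheses i)–ii), and the coefficient term is $\mathcal O(d)\to 0$, whence $\tfrac1d\|z\|_X^2 \to 0$, as claimed.

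I expect the main obstacle to be the gradient nonlinearity in the energy estimate, specifically $2\sigma\int_\Omega M_d(\nabla\dot z\cdot\nabla\psi^d)\dot z$: it carries the full time derivative $\nabla\dot z$ of the unknown and can only be closed by absorption into the $b$-dissipation, which forces exploiting the $L^\infty$-in-space bound on $\nabla\psi^d$ and checking that the resulting constant is uniform in $d$. A secondary subtlety is that the target space is the \emph{weaker} norm $X = W^{1,\infty}(0,T;L^2(\Omega)) \cap H^1(0,T;H^1(\Omega))$ rather than $\|\cdot\|_W$: this is exactly what makes a single energy estimate (testing with $\dot z$ only, without differentiating the equation in time and testing with $\ddot z$) sufficient, and it is precisely matched to the $H^1$/$L^2$/$H^1(H^{-1/2})$ regularity assumed on the data differences.
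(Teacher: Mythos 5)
Your proposal is correct and follows essentially the same route as the paper: subtract the transported weak forms, run a single lower-order energy estimate on the difference (testing with its time derivative), absorb the quadratic-gradient terms into the $b$-dissipation using the uniform-in-$d$ bounds of \cref{prop:uniformwellp}, bound the coefficient mismatches by $\mathcal{O}(\absd)$ via the smoothness of $F_d$, and close with Gronwall. The only differences are bookkeeping: the paper works with the scaled quotient $z^d=(\psi^d-\psi)/\sqrt{d}$ and keeps the identity matrix on the left-hand side (moving $(M_d-\mathrm{I})/\sqrt{d}$ terms to the source), whereas you keep $M_d$ on the left and divide by $d$ at the end — both are equivalent.
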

\begin{proof}
Let $z^d$ stand for the difference quotient:
\[z^d = \frac{1}{\sqrt{d}} (\psi^d-\psi^0) = \frac{1}{\sqrt{d}} (\psi^d-\psi).\] 
We recall that $\psi$ and $\psi^d$ satisfy \eqref{eq:holder2}. By subtracting their respective weak forms, we obtain the following equation for $z^d$:
 \begin{align*} 
 \begin{split}
 &\left((1-2k\dot\psi)\ddot z^d, \phi\right) + \left(c^2 \nabla z^d + b \nabla \dot z^d , \nabla \phi\right)\\
 =&\,  \left( 2 k \ddot \psi^d \dot z^d, \phi\right)   -\frac{I_d-1}{\sqrt{d}} \left((1-2k\dot\psi^d)\ddot\psi^d, \phi\right)
 \\&- \left( \frac{M_d-I}{\sqrt{d}}(c^2 \nabla\psi^d + b \nabla \dot\psi^d),\nabla \phi\right) 
 + 2 \sigma \left(\frac{M_d-I}{\sqrt{d}}\nabla \dot\psi^d \cdot \nabla\psi^d, \phi\right) \\
&\qquad -2 \sigma(\nabla\dot\psi\cdot \nabla z^d, \phi)  + 2 \sigma (\nabla\psi \cdot \nabla \dot z^d,\phi)
+ \int_{\bndry} \frac{w_d-1}{\sqrt{d}} \left(c^2 g^d + b \dot g^d\right) \phi \ds \\
&\qquad + \int_{\bndry} \left(c^2  \frac{g^d-g}{\sqrt{d}} + b \frac{\dot g^d-\dot g}{\sqrt{d}}\right) \phi \ds,
 \end{split}
 \end{align*}
with initial conditions
\[
z^d(0) = \frac{1}{\sqrt{d}} (\psi_0^d-\psi_0) ,\qquad \dot z^d(0) = \frac{1}{\sqrt{d}} (\psi_1^d-\psi_1).
\] 
We next test this difference equation with $\phi= \dot z^d \in L^2(0,T; H^1(\Omega))$ and integrate over $(0, t)$. We again use the notation $a = 1-2k\psid$ (which implies $\dot a = - 2k\psidd$), and relation \eqref{eq:energy_id_1},
we then have after rearranging the terms
\begin{align} \label{z_est}
\begin{split}
&\frac{1}{4}\normso{\zd(t)}{2}^2 +\frac{c^2}{2} \normso{\nabla \z(t)}{2}^2 + b \ints \normso{\nabla \zd}{2}^2 \dt 
\\\leq \;  & \frac{3}{4}\normso{\zd(0)}{2}^2 +\frac{c^2}{2} \normso{\nabla \z(0)}{2}^2 - \frac{1}{2}\ints(\dot a\zd,\zd) \dt \\& -  \ints \left(\frac{I_d-1}{\sqrt{d}} a^d \psidd^d ,\zd\right)\dt - 
  \ints \left[ \left( \frac{M_d-I}{\sqrt{d}} (c^2 \npsi + b \npsid), \nzd \right) \right.  
  \\& + \left. 2\sigma\left(\frac{M_d-I}{\sqrt{d}}\nabla \dot\psi^d \cdot \nabla\psi^d, \zd\right) \right] \dt  
  - 2\sigma \ints  (\npsid \cdot \nz, \zd) \dt \\ & + 2\sigma \ints (\npsi \cdot \nzd, \zd) \dt 
 + \ints \intg \frac{w_d-1}{\sqrt d} (c^2 g^d + b \dot g ^d) \zd \ds\dt 
 \\ &+  \ints \intg \left( c^2  \frac{g^d-g}{\sqrt{d}} + b \frac{\dot g^d-\dot g}{\sqrt{d}}\right) \zd \ds \dt. 
\end{split}
\end{align} 
By relying on H\"older's inequality and Ehrling's inequality \eqref{eq:usefulineq}, we obtain 
\begin{align*} 
\ints \vert (\dot a \zd, \zd) \vert \dt \leq\;& \normtso{\dot a}{\infty}{2} \ints \normso{\zd}{4}^2 \dt \\
\leq \;& \normtso{\dot a}{\infty}{2}\ints \left(\ce(\epsilon_1)\normso{\zd}{2}^2 + \epsilon_1 \normso{\nabla \zd}{2}^2\right) \dt
\end{align*}
for all $\epsilon_1>0$.
Furthermore, we have
\begin{align*}
\begin{split}
\ints \left(\frac{I_d-1}{\sqrt{d}} a^d\ddot\psi^d, \zd\right) \dt 
\leq&\, \begin{multlined}[t] \frac{1}{2}\frac{\normso{I_d-1}{\infty}^2}{d} \|a^d \|^2_{L^\infty(L^\infty(\Omega))} \ints \normso{\psidd^d}{2}^2 \dt 
\\+ \frac{1}{2}\ints \normso{\zd}{2}^2\dt. \end{multlined}
\end{split}
\end{align*}
We next use the differentiability of the shape deformation $F_d$ and of the quantities $I_d$, $M_d$ and $w_d$ (see \Cref{ap:defprop}), as well as the uniform boundedness of $\psi^d$ in the norm $\|\cdot\|_W$ to further simplify the expression as:
\begin{align*}
\begin{split}
\ints \left(\frac{I_d-1}{\sqrt{d}} a^d\ddot\psi^d, \zd\right) \dt
\leq&\, \frac{1}{2}\ints \normso{\zd}{2}^2\dt +\bigO(\absd).
\end{split}
\end{align*}
We treat similarly the other volume terms and estimate them as follows
\begin{align*}
	\begin{split}
		\ints  \left( \frac{M_d-I}{\sqrt{d}} (c^2 \npsi + b \npsid)\right. \left.\vphantom{\frac{I}{d}}, \nzd \right) \dt 
		\leq\, \frac{\epsilon_1}{2}\ints \|\nzd\|^2_{L^2(\Omega)} \dt + \bigO(\absd),
	\end{split}
\end{align*}
while, 
\begin{equation*}
	\begin{aligned}
		\begin{multlined}
			\ints  \left( \frac{M_d-I}{\sqrt{d}} \npsid^d \cdot\npsi^d, \zd \right) \dt
			\leq  \frac{1}{2}\ints \|\zd\|^2_{L^2(\Omega)} \dt  + \bigO(\absd),
		\end{multlined}
	\end{aligned}
\end{equation*}
because $\psi^d$ and $\dot{\psi^d}$ are uniformly bounded in $W$ thanks to \cref{prop:uniformwellp}. We also get 
\begin{equation*}
	\begin{aligned}
		\begin{multlined}[t]
			\ints  \left( \npsi^d \cdot\nzd, \zd \right) \dt \\
			\leq  \frac{1}{2} \normtso{\npsi^d}{\infty}{\infty} \ints \left(\epsilon_1 \normso{\nzd}{2}^2 + \inv{\epsilon_1}\normso{\zd}{2}^2 \right)\dt.
		\end{multlined}
	\end{aligned}
\end{equation*}
Finally, we have, for all $\epsilon>0$
\begin{equation*}
	\begin{aligned}
		\begin{multlined}[t]
			\ints  \left( \npsid^d \cdot\nz, \zd \right) \dt\\
			\leq  \frac{\epsilon}{2} \sup_{s\in (0,t)} \normso{\nz(s)}{2}^2 + \frac{1}{2\epsilon} \normtso{\npsid^d}{2}{\infty}^2 \ints \normso{\zd}{2}^2 \dt.
		\end{multlined}
	\end{aligned}
\end{equation*}
We estimate the boundary terms as follows:
\begin{equation*}
	\begin{aligned}
		\begin{multlined}[c]
			\ints  \intg \frac{w_d-1}{\sqrt d} (c^2 g + b  \dot g)  \zd \ds \dt 
			 \leq\; \epsilon_1\ctr^2  \ints \normho{\zd}{1}^2 \dt + \bigO(\absd),
		\end{multlined}
	\end{aligned}
\end{equation*}
where $ \bigO(\absd)$ term arises from the shape differentiability of $w_d$. Similarly,
\begin{equation*}
\begin{aligned}
\begin{multlined}[t]
\ints \intg \left( c^2  \frac{g^d-g}{\sqrt{d}}\right.  + \left. b \frac{\dot g^d-\dot g}{\sqrt{d}}\right) \zd \ds \dt
\\ \leq\;  \frac{1}{4 \epsilon_1}\max{(c^4,b^2)}\frac{\normthg{g^d-g}{1}{-1/2}^2}{d} 
 + \epsilon_1\ctr^2  \ints \normho{\zd}{1}^2 \dt.
\end{multlined}
\end{aligned}
\end{equation*}
We can then choose $\epsilon_1$ small enough so that 
$$ \epsilon_1 \left(\ctr^2 + \frac{1}{2}+ \frac{1}{2} \normtso{\npsid}{\infty}{\infty} + \normtso{\dot a}{\infty}{2}\right) < b$$
and the terms with 
$\ints \normso{\nzd}{2}^2\dt$ on the right-hand side can be absorbed by the left-hand side in \eqref{z_est}. We notice also that the terms with initial conditions and boundary data are $\smallO(1)$ (i.e, they tend to 0 when $d\rightarrow 0$ ). We then infer that 
\begin{align*}
\begin{split}
&
\normso{\zd(t)}{2}^2 + \normso{\nz(t)}{2}^2 + \ints \normso{\nzd}{2}^2\dt  \\ \lesssim &  \ints \normso{\zd}{2}^2 \dt +  \ints \normso{\nz}{2} ^2 \dt 
 + \epsilon \sup_{s\in (0,t)} \normso{\nz(s)}{2}^2 + \bigO(\absd) +\smallO(1).
\end{split}
\end{align*}
Noticing that each of the terms on the left-hand side is bounded by the right-hand side independently, we take the supremum for $\tau \in(0,t)$ to get the following bound:
\begin{align*}
\begin{split}
&
\sup_{\tau \in(0,t)} \normso{\zd(\tau)}{2}^2 + \sup_{\tau \in(0,t)} \normso{\nz(\tau)}{2}^2 + \ints \normso{\nzd}{2}^2\dt  
\\ \lesssim &  \ints \normso{\zd}{2}^2 \dt +  \ints \normso{\nz}{2} ^2 \dt 
 + \epsilon \sup_{s\in (0,t)} \normso{\nz(s)}{2}^2 + \smallO(1).
\end{split}
\end{align*}
We then fix $\epsilon>0$ sufficiently small and use that 
\begin{align*}
\normso{\z (t)}{2} = \normso{\z(0) + \ints \zd (s) \dt}{2} \leq\ t \sup_{\tau \in (0,t)} 
\normso{\zd(\tau)}{2},
\end{align*}
together with Gronwall's inequality to get
\begin{align*}
\begin{split}
\normso{\z(t)}{2}^2 + \normso{\zd(t)}{2}^2 + \normso{\nz(t)}{2}^2 + \ints \normso{\nzd}{2}^2\dt \lesssim \ \smallO(1).
\end{split}
\end{align*} 
This final inequality concludes our proof.
\end{proof}
\section{Analysis of a general adjoint problem}\label{sec:adjp}
Different cost functions are of practical interest for the optimization problem at hand. As announced, we first consider an $L^2(L^2)$ tracking-type objective. That is, we wish the solution $\psi$ of the state problem to match a 
desired potential field $\psi_{D}\in L^2(0,T;L^2(\Omega))$ in the focal region $D_S$ within a certain time interval $(t_0, t_1)$. Such a cost functional can be expressed by
\begin{align}\label{eq:costf1}
	J(\Omega) = \frac{1}{2}\int_0^T \int_{\Omega} \left(\psi - \psi_D\right)^2 \chi_D \dxs= \int_0^T \int_{\Omega} j(\psi)\dxs,
\end{align}
where $\chi_D$ is the indicator function of \(D = D_S \times (t_0,t_1)\).
The adjoint problem is then formally given by
\begin{empheq}{align}\label{eq:sysAdjKuz1}
	\begin{array}{rl}
	\pdfrac{}{t}\left((1-2k {\dot\psi})\dot p\right) - c^2 \laplace p 
			 + b\laplace \dot p - 2 \sigma \nabla \cdot\left(\dot p \nabla{\psi}\right)= ( \psi - \psi_D)\chi_D & \ \mathrm{on\;} Q,\\
	c^2\pdfrac{p}{n}-b\pdfrac{\dot p}{n}+ 2 \sigma g\dot p =0 &\ \mathrm{on\;} \Sigma,\\
	p(T) = \dot p(T) = 0&\ \mathrm{on\; \Omega},
	\end{array}
\end{empheq}
which in weak form yields
\begin{equation} \label{eq:weak_adj_1}
\begin{aligned}
\begin{multlined}[t]\int_0^T\int_{\Omega}\pdfrac{}{t}\left((1-2k {\dot\psi})\dot p\right) \phi \dx \dt  
\\+ \int_0^T\int_{\Omega}(c^2 \nabla p - b\nabla\dot p + 2 \sigma\dot p \nabla{\psi}) \cdot\nabla \phi\dx\dt
= \int_0^T\int_{\Omega}f \phi\dx\dt, \end{multlined}
\end{aligned}
\end{equation}
for all $\phi \in L^2(0,T;H^1(\Omega))$ with $p(T) = \dot p(T) = 0$ and $f=(\psi-\psi_{D})\chi_D$.

\indent For the sake of considering other objective functionals, we analyze the adjoint problem in a more general setting, where we allow for non-zero data at final time:
	\[
	p(T)=p_0, \qquad \dot{p}(T)=p_1
	\]
and a general right-hand side $f \in L^2(0,T; L^2(\Omega))$. 
\begin{theorem}\label{th:adjreg}
Under the assumptions of \cref*{th:nonlinreg} and given $f \in L^2(0,T; L^2(\Omega))$, the adjoint problem
\eqref{eq:sysAdjKuz1} has a unique solution $p$ in 
\begin{equation}\label{eq:Pdef}
\begin{aligned}
\mathcal{P} = \left\{ p \in L^{\infty}(0,T;H^2(\Omega))\right.:&\, \dot p \in L^2(0,T;H^2(\Omega))\cap L^{\infty}(0,T;H^1(\Omega)),
\\ &\,\left. \ddot p \in L^2(0,T,L^2(\Omega)) \right\}.
\end{aligned}
\end{equation}
Furthermore, the solution satisfies the estimate
\begin{align*}
\| p\|_{\mathcal{P}}^2 \lesssim \normho{p_0}{2}^2 + \normho{p_1}{1}^2 + \normtlhg{g}{2}{3/2}^2 +\intt\|f\|_{L^2(\Omega)}^2\dt.
\end{align*} 
\end{theorem}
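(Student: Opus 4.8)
The plan is to reverse time, recast the backward adjoint problem as a forward evolution problem of the same structural type as the linearized state equation \eqref{eq:linearized1}, and then reproduce the Galerkin-plus-energy-estimate scheme of \cref{th:linearized}, adapted to the lower target regularity $\mathcal{P}$ and to the new gradient coupling and boundary terms. Setting $q(\tau)=p(T-\tau)$, $\tilde\psi(\tau)=\psi(T-\tau)$, $\tilde g(\tau)=g(T-\tau)$ and $\tilde\alpha(\tau)=(1-2k\dot\psi)(T-\tau)$, a direct computation turns \eqref{eq:sysAdjKuz1} into
\[
\pdfrac{}{\tau}(\tilde\alpha\dot q)-c^2\laplace q-b\laplace\dot q+2\sigma\nabla\cdot(\dot q\,\nabla\tilde\psi)=\tilde f,
\]
with boundary condition $c^2\pdfrac qn+b\pdfrac{\dot q}n-2\sigma\tilde g\dot q=0$ and initial data $q(0)=p_0$, $\dot q(0)=-p_1$. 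Because $\psi\in\mathcal{W}\subset W$ by \cref{th:nonlinreg}, the coefficient is non-degenerate, $\tfrac12\le\tilde\alpha\le\tfrac32$, its derivative $\dot{\tilde\alpha}=2k\ddot\psi(T-\cdot)$ is controlled in $L^\infty(0,T;L^2(\Omega))\cap L^2(0,T;H^1(\Omega))$, and $\nabla\tilde\psi$ is bounded in $L^\infty(0,T;L^\infty(\Omega))$ via $H^2(\Omega)\hookrightarrow L^\infty(\Omega)$; these are exactly the bounds needed to treat the variable coefficient and the gradient coupling. Since the problem is linear in $q$, no smallness of the adjoint data is required. Note that, owing to the specific form of the adjoint boundary condition together with $\pdfrac{\tilde\psi}n=\tilde g$, the corresponding weak form (the reversed analogue of \eqref{eq:weak_adj_1}) carries \emph{no} boundary integral: the coupling enters only as the volume term $-2\sigma(\dot q\,\nabla\tilde\psi,\nabla\phi)$.

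I would then establish two energy estimates on a Galerkin approximation $q_n$, so that $\dot q_n,\ddot q_n$ lie in the finite-dimensional test space. \textbf{First}, testing with $\phi=\dot q_n$ and using \eqref{eq:energy_id_1} gives control of $\normtso{\dot q}{\infty}{2}$, $\normtso{\nabla q}{\infty}{2}$ and $\normtso{\nabla\dot q}{2}{2}$: the coupling term $-2\sigma(\dot q_n\nabla\tilde\psi,\nabla\dot q_n)$ is absorbed by the damping $b\normso{\nabla\dot q_n}{2}^2$ after Young's inequality, and the term $\tfrac12(\dot{\tilde\alpha}\dot q_n,\dot q_n)$ is handled with the Ehrling inequality \eqref{eq:usefulineq} as in \cref{th:linearized}. \textbf{Second}, testing with $\phi=\ddot q_n$ produces the dissipation $\ula\normso{\ddot q_n}{2}^2$ from $(\tilde\alpha\ddot q_n,\ddot q_n)$ and the term $\tfrac b2\frac{d}{d\tau}\normso{\nabla\dot q_n}{2}^2$, yielding $\normtso{\ddot q}{2}{2}$ and $\normtso{\nabla\dot q}{\infty}{2}$; the term $c^2(\nabla q_n,\nabla\ddot q_n)$ is integrated by parts in time, leaving $-c^2\normso{\nabla\dot q_n}{2}^2$ (controlled by the first estimate) and an endpoint term $c^2(\nabla q_n(\tau),\nabla\dot q_n(\tau))$ absorbed into $\tfrac b2\normso{\nabla\dot q_n}{2}^2$ after Young.

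\textbf{The main obstacle} is the gradient coupling $-2\sigma(\dot q_n\nabla\tilde\psi,\nabla\ddot q_n)$ in this second estimate: the target space $\mathcal{P}$ does not control $\nabla\ddot q$, so this term cannot be absorbed directly. I would instead integrate it by parts in time, converting it into the endpoint term $-2\sigma(\dot q_n\nabla\tilde\psi,\nabla\dot q_n)\big|_0^\tau$ (again absorbed by $\tfrac b2\normso{\nabla\dot q_n}{2}^2$ after Young) plus $2\sigma\ints(\ddot q_n\nabla\tilde\psi+\dot q_n\nabla\dot{\tilde\psi},\nabla\dot q_n)\dt$; in the latter, the $\ddot q_n$ contribution is absorbed by $\ula\normso{\ddot q_n}{2}^2$ and the remaining factors are bounded using the uniform $W$-estimates on $\psi$ and the first energy estimate, with any resulting weights handled by Gr\"onwall's inequality. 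This is precisely where the adjoint boundary condition plays its role: the sign and the $2\sigma g\dot p$ term are exactly what is needed for the boundary contributions to vanish in the weak formulation, so that $\sigma\neq0$ does not introduce an uncontrolled boundary term but only a volume term amenable to the time integration by parts.

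Finally, with both estimates in hand I would recover the spatial regularity exactly as in \cref{th:linearized}. Writing $\Theta=c^2q+b\dot q$, one sees that $\Theta$ solves $-\laplace\Theta+\Theta=\hat f$ on $\Omega$ with Neumann datum $\pdfrac\Theta n=2\sigma\tilde g\dot q$, where $\hat f=\tilde f-\partial_\tau(\tilde\alpha\dot q)-2\sigma\nabla\cdot(\dot q\,\nabla\tilde\psi)+\Theta$ lies in $L^2(0,T;L^2(\Omega))$ by the two energy estimates and the $W$-regularity of $\psi$. Elliptic regularity \cite[Theorem 2.5.1.1]{grisvard2011elliptic} then gives $\Theta\in L^2(0,T;H^2(\Omega))$, the boundary datum contributing the term $\normtlhg{g}{2}{3/2}^2$ after estimating $\|\tilde g\dot q\|_{H^{1/2}(\bndry)}\lesssim\|\tilde g\|_{H^{3/2}(\bndry)}\normho{\dot q}{1}$; solving the ODE $b\dot q+c^2q=\Theta$ by variation of constants upgrades this to $q\in L^\infty(0,T;H^2(\Omega))$ and $\dot q\in L^2(0,T;H^2(\Omega))$, which together with the energy bounds yields $q\in\mathcal{P}$ and the stated estimate. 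Existence follows from the uniform Galerkin bounds by weak compactness, uniqueness from linearity via the first energy estimate applied to the difference of two solutions, and reversing time returns the result for $p$.
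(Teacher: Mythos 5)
Your proposal is correct and follows essentially the same route as the paper's proof: time reversal, the observation that the adjoint boundary condition together with $\partial\psi/\partial n=g$ eliminates all boundary integrals from the weak form, two energy estimates (testing with $\dot q$ and $\ddot q$) in which the gradient-coupling term tested against $\nabla\ddot q$ is integrated by parts in time, and finally the elliptic bootstrap via $\Theta=c^2q+b\dot q$ with the fractional-Sobolev multiplication estimate for the Neumann datum $2\sigma \tilde g\dot q$ and the ODE argument. The only (immaterial) differences are bookkeeping of signs under time reversal, where your version is in fact the more carefully tracked one.
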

\begin{proof}
	To facilitate the analysis,
	we first reverse the time direction. We then see that 
	$\tilde{p}(t) = p(T-t)$
	satisfies
	\begin{equation} \label{eq:adjreg} 
	\begin{aligned}
	\begin{multlined}[t]\int_0^T\int_{\Omega}\pdfrac{}{t}\left((1+2k {\dot\tpsi})\dot \tp\right) \phi \dx \dt  
	\\+ \int_0^T\int_{\Omega}(c^2 \nabla \tp + b\nabla\dot \tp - \sigma \dot \tp \nabla{\tpsi}) \cdot\nabla \phi\dx\dt = \; 
	\int_0^T\int_{\Omega}\tilde{f} \phi \dx\dt, \end{multlined}
	\end{aligned}
	\end{equation}
	for all $\phi \in L^2(0,T;H^1(\Omega))$ with $\tp(0) = p_0 $, $\dot \tp(0) = p_1$ and  $\tilde{f}=(\tpsi-\tpsi_{D})\chi_D \in L^2(0,T; L^2(\Omega))$. \\
\indent The existence of weak solutions can be obtained rigorously using standard Galerkin approximations. As before, we will only present the derivation of an a priori estimate here.
We derive the energy estimate by testing the (semi-discrete) weak form 
\eqref{eq:adjreg} with $\dot \tp$ and $\ddot \tp$. We then combine it with an elliptic regularity argument to enhance the regularity of the solution.\\[2mm]
\noindent\textbf{Lower-order energy estimate.}
We test \eqref{eq:adjreg} with $\dot\tp$ 

\begin{align*}
\begin{split}
\ints \left((1+2k {\dot\tpsi}) \ddot\tp + 2k\ddot\tpsi \dot\tp, \dot \tp\right) \dt   
+ \ints (c^2 \nabla \tp + b\nabla\dot \tp - \sigma \dot \tp \nabla{\tpsi},\nabla \dot\tp)\dt = 
\ints(\tilde{f}, \dot \tp)\dt.
\end{split}
\end{align*}
 
We denote by $\tilde a =1 +2k \dot\tpsi$. $\psi \in \mathcal{W}$ implies that $\tpsi \in \mathcal{W}$; we therefore have that 
$\frac{1}{2}<\tilde a< \frac{3}{2}$ almost everywhere on $\Omega$ for all $t \in [0,T]$. Knowing that 
$$ \tilde a \ddot\tp \dot\tp = \frac{1}{2} \diffrac{}{t}(\tilde a \dot\tp^2) - \frac{1}{2}(\dot{\tilde a} \dot\tp^2),$$
we infer  
\begin{multline*}
\frac{1}{4} \normso{\dot \tp(t)}{2}^2  + \frac{c^2}{2} \normso{\nabla \tp(t)}{2}^2 
+ b \ints \normso {\nabla \dot \tp}{2}^2\dt\\
\leq \frac{3}{4} \normso{\dot \tp(0)}{2}^2  + \frac{c^2}{2} \normso{\nabla \tp(0)}{2}^2 
+ \sigma \ints\int_{\Omega}  \dot \tp \nabla{\psi}\cdot\nabla \dot\tp\dx\dt \\
- \frac{1}{2}\ints (\tilde{\dot a} \dot \tp,\dot\tp)  \dt + \frac{1}{2}\|\tilde{f}\|_{L^2(L^2(\Omega))}^2 + \frac{1}{2}\ints\normso{\dot \tp}{2}^2\dt.
\end{multline*}
We estimate the term
\begin{equation*}
	\begin{aligned}
		\begin{multlined}[t]
			\ints\int_{\Omega}  \dot \tp \nabla{\psi}\cdot\nabla \dot\tp\dx\dt \leq  \normtso{\nabla\tpsi}{\infty}{\infty}^2 \frac{1}{4\epsilon} \ints \normso{\dot\tp}{2}^2\dt + \epsilon \ints \normso{\nabla\dot\tp}{2}^2\dt.
		\end{multlined}
	\end{aligned}
\end{equation*}
\\
\noindent Using Ehrling's inequality \eqref{eq:usefulineq}, we estimate, for some arbitrary $\epsilon>0$, the term 
\begin{align*}
	\begin{split}
		\ints \vert(\dot{\tilde a} \dot \tp,\dot\tp)\vert  \dt \leq \normtso{\dot{\tilde a}}{\infty}{2} \ints \left(\ce(\epsilon)\normso{\dot\tp}{2}^2 + \epsilon \normso{\nabla \dot \tp}{2}^2\right) \dt.
	\end{split}
\end{align*}
For $\epsilon$ small enough this yields the following estimate: 
\begin{multline}\label{eq:adjregest1}
\normso{\dot \tp(t)}{2}^2  + \normso{\nabla \tp(t)}{2}^2 
+  \ints \normso {\nabla \dot \tp}{2}^2\dt\\
\lesssim \;  \normso{\dot \tp(0)}{2}^2  +  \normso{\nabla \tp(0)}{2}^2 
+ \|\tilde{f}\|_{L^2(L^2(\Omega))}^2.
\end{multline}
Next we test with $\ddot \tp$
\begin{align} \label{eq:adjreg2}
\begin{split}
\ints \left(\tilde{a} \ddot\tp + \dot{\tilde a} \dot\tp, \ddot \tp\right) \dt   
+ \ints (c^2 \nabla \tp + b\nabla\dot \tp , \nabla\ddot\tp)\dt 
- 2 \sigma \ints(\dot \tp  \nabla \tpsi,\nabla \ddot\tp)\dt\\ = 
\ints(\tilde{f}, \ddot \tp)\dt.
\end{split}
\end{align}
We integrate by parts in time the terms 
\begin{align*}
\begin{split}
c^2\ints ( \nabla \tp , \nabla\ddot\tp)\dt = - c^2 \ints ( \nabla \dot\tp , \nabla\dot\tp)\dt + c^2 \left(\nabla \tp(t) , \nabla\dot\tp(t)\right) \\
- 2\sigma \ints(\dot \tp  \nabla \tpsi,\nabla \ddot\tp)\dt = 2\sigma \ints(\ddot \tp  \nabla \tpsi + \dot \tp  \nabla\dot \tpsi ,\nabla \dot\tp)\dt
- 2\sigma \left(\dot \tp(t)  \nabla \tpsi(t),\nabla \dot\tp(t)\right).
\end{split}
\end{align*}
With this, we can rewrite \eqref{eq:adjreg2} as
\begin{multline*}
\ints \left(\tilde a \ddot\tp, \ddot \tp\right) \dt      
+ b \ints (\nabla\dot \tp , \nabla\ddot\tp)\dt 
 =  - \ints (\dot{\tilde a} \dot\tp, \ddot \tp) \dt  + c^2 \ints (\nabla \dot \tp , \nabla\dot\tp)\dt
\\- c^2 \left(\nabla \tp(t) , \nabla\dot\tp(t)\right) + 2\sigma \ints(\ddot \tp  \nabla \tpsi + \dot \tp  \nabla\dot \tpsi ,\nabla \dot\tp)\dt 
+ 2\sigma \left(\dot \tp(t)  \nabla \tpsi(t),\nabla \dot\tp(t)\right) \\
+ \ints(\tilde{f}, \ddot \tp)\dt.
\end{multline*}
Next we estimate the right-hand side terms starting with the term 
\begin{align*}
\begin{split}
\ints \vert(\dot{\tilde a} \dot\tp, \ddot \tp)\vert \dt
 \leq & \co \normtho{\dot{\tilde a}}{2}{1} \sup_{\tau \in (0,t)}\normso{\dot \tp (\tau)}{4} \left(\ints \normso{\ddot \tp}{2}^{2}\dt\right)^{1/2}  \\
\leq & \frac{\co^2 }{4 \epsilon_1} \normtho{\dot{\tilde a}}{2}{1}^2 \sup_{\tau \in (0,t)}\normso{\dot \tp (\tau)}{4}^2 
+ \epsilon_1 \ints \normso{\ddot \tp}{2}^{2}\dt. \\
\end{split}
\end{align*}
Using Ehrling's inequality \eqref{eq:usefulineq}, we obtain for all $\epsilon_2>0$
$$\normso{\dot \tp (\tau)}{4}^2 \leq \ce(\epsilon_2)\normso{\dot \tp (\tau)}{2}^2 + \epsilon_2\normso{\nabla \dot \tp (\tau)}{2}^2,$$
we can first choose $\epsilon_1$ small enough to absorb the term $\ints \normso{\ddot \tp}{2}^{2} \dt $, 
then $\epsilon_2=\epsilon_2(\epsilon_1)>0$ afterwards small enough to absorb the term 
$\normso{\nabla \dot \tp (\tau)}{2}^2$. 

Usual computations then lead to the bound
\begin{multline}\label{eq:adjregest2}
\ints \normso{\ddot \tp}{2}^2 \dt      
+ \normso{\nabla\dot \tp (t)}{2}^2 
\lesssim \; \normso{\nabla\dot \tp (0)}{2}^2 + \sup_{\tau \in (0,t)}\normso{\dot \tp (\tau)}{2}^2  
\\+ \sup_{\tau \in (0,t)}\normso{\nabla \tp(t)}{2}^2 + \|\tilde{f}\|_{L^2(L^2(\Omega))}^2.
\end{multline}
Estimates \eqref{eq:adjregest1} and \eqref{eq:adjregest2} yield
\begin{multline*}
 \normso{\dot \tp(t)}{2}^2  + \normso{\nabla \tp(t)}{2}^2       
+ \normso{\nabla\dot \tp (t)}{2}^2 + \ints \normso{\ddot \tp}{2}^2 \dt
\\
\lesssim \normso{\nabla\dot \tp (0)}{2}^2 + \normso{\dot \tp(0)}{2}^2  +  \normso{\nabla \tp(0)}{2}^2 + \|\tilde{f}\|_{L^2(L^2(\Omega))}^2.
\end{multline*}

\noindent\textbf{Bootstrap argument.}
From the above-analysis we can infer that 
$$\tp \in W^{1,\infty}(0,T;H^1(\Omega))\cap H^2(0,T,L^2(\Omega)).$$
We present the next arguments for $l = 3$; the case
$l = 2$ can be treated analogously. We introduce the new variable $\dbtilde p = c^2 \tp + b \dot \tp,$ which verifies 
\begin{empheq}{align*}
	\begin{array}{rl}
	 - \laplace \dbtilde p + \dbtilde p = - \pdfrac{}{t}\left((1+2k {\dot\tpsi})\dot \tp\right) - 2\sigma\nabla \cdot\left(\dot \tp \nabla{\psi}\right) + \tilde{f}  +\dbtilde p& \quad \mathrm{on\;} Q,\\
	\pdfrac{\dbtilde p}{n} = 2\sigma g\dot \tp  &\quad \mathrm{on\;} \Sigma.\\
	\end{array}
\end{empheq}
It is easy to see that $- \laplace \dbtilde p + \dbtilde p \in L^2(\Omega)$ a.e.\ in time. 
Additionally, if $\Omega$ is a bounded subset of $\Rs^3$, then $\bndry$ is a compact 2-manifold. One can use properties of pointwise multiplication on fractional Sobolev spaces on $n$-manifolds to show that 
since $g \in H^{3/2}(\bndry)$ and $\dot \tp \in  H^{1/2}(\bndry)$ a.e. in time, then $2\sigma g\dot \tp \in H^{1/2}(\bndry)$ a.e.\ in time; see 
\cite[Corollary 3]{holst2009rough}. We then use elliptic regularity to conclude that $\dbtilde p \in L^2(0,T;H^2(\Omega))$ 
(as done in \cite[Theorem 4.3]{KALTENBACHER20191595}, for example). 

\indent Knowing that $\tp$ solves the ODE: $\dbtilde p = c^2 \tp + b \dot \tp,$ one may write
\begin{align} \label{est_tp}
\tp = e^{-c^2t/b} \left(p_0 + \ints e^{c^2s/b} \frac{1}{b} \dbtilde p(s) \dt\right).
\end{align}
Analogously to the proof of \cref{th:linearized}, it then follows that $\tp \in L^\infty(0,T, H^2(\Omega))$ and subsequently that $\dot \tp \in L^2(0,T, H^2(\Omega))$. The energy estimate follows from \eqref{est_tp} and the elliptic regularity estimate for $\dbtilde p$. 
\end{proof}

\section{Shape derivative}\label{sec:shapeanalysis}
\Mostafa{
Given cost functional $J$, we consider the following shape optimization problem 
\begin{align*}
\left\{ 
\begin{array}{ll}
\displaystyle \min_{(\psi, \Omega) \in W \times \mathcal{O}_{ad} }& J(\psi,\Omega), \quad  \\
\textrm{s.t.} & \psi \textrm{ solves \eqref{pb:wellposedness} on } \Omega 
\end{array}
\right.
\end{align*}
with $W$ defined in \eqref{def_W}, and $\mathcal{O}_{ad}$ is the set of admissible domains:
\[
\mathcal{O}_{ad} \subset \{
\Omega \textrm{ is a domain such that }\Omega \in C^{2,1}, \ \overline\Omega \subset U
\}.
\] 
}
Our previous analysis allows us to now focus on deriving the shape sensitivity $\diff J (\Omega) h$ of a cost functional $J$ in the direction of a smooth enough vector field $h \in \mathcal{D}$. We recall that the Eulerian derivative of $J$ in the direction of the vector field $h$ is defined as
\begin{align}\label{eq:eulDer}
	\diff J(\Omega)h = \lim_{d\rightarrow 0}\frac{1}{d}\left(J(\psi_d, \Omega_d) - (J(\psi, \Omega)\right),
\end{align}
where $\psi$ and $\psi_d$ satisfy the PDE on the original and on the perturbed domain, respectively. The difficulty ensuing from the difference quotient in \eqref{eq:eulDer} involving functions defined on different domains is overcome by using the method mappings, discussed in~\cref{sec:wellp}.\\
\indent The first cost functional of interest, given in \eqref{eq:costf1}, can be written on a deformed domain  \[J (\Omega_d) = \frac{1}{2}\int_0^T \int_{\Omega_d} (\psi_d - \psi_{D_d})^2\chi_{D_d} \dxs=\frac{1}{2}\int_0^T \int_{\Omega} (\psi^d - \psi^d_D)^2\chi_D^d I_d \dxs.\]
Motivated by application purposes and to simplify the analysis, we assume that the focal region is compactly contained in $\Omega$; i.e,  $\overline {D_S} \subsetneq \Omega$. Thus
we impose $\supp h \cap \overline {D_S} = \emptyset \implies \chi_D^d = \chi_D$, and have 
\[J (\Omega_d) = \frac{1}{2}\int_0^T \int_{\Omega} (\psi^d - \psi^d_D)^2\chi_D I_d \dx.\]
\noindent To simplify the presentation and in agreement with HIFU applications, we assume going forward that the potential field is at rest at $t=0$ for all small deformations; that is,
$\psi^d(0) = \dot \psi^d(0) =0$,
for all $\absd$ small.

We generalize in what follows a useful identity \cite[Lemma 6]{kaltenbacher2016shape} to accommodate the term $\nabla\psi\cdot\nabla\dot\psi$ which will be needed in the computation of the shape derivative. 

\begin{lemma}\label{lem:trans2}
	For $a \in H^1(\Omega)$ and $u, v \in H^2(\Omega)$ the following identity holds:
	\begin{align*}
		& \int_{\Omega} a M \nabla u \cdot \nabla v  \\ 
		= & \int_{\Omega} \left(\nabla \cdot (a\nabla u) (h\cdot \nabla v)+\nabla\cdot (a\nabla v) (h\cdot \nabla u) 
		- (\nabla u \cdot \nabla v) (h\cdot\nabla a) \right)\\
		& -\int_{\partial\Omega} a \left(\pdfrac{u}{n}(h\cdot \nabla v) + \pdfrac{v}{n}(h\cdot \nabla u)\right) + \int_{\partial \Omega} a\nabla u \cdot \nabla v (h\cdot n),
	\end{align*}
	where $M = I\div h - (\nabla h)^T -(\nabla h)$, as defined in \eqref{eq:smoothdefM}.
\end{lemma}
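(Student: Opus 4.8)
The plan is to prove the identity by expanding the symmetric tensor $M$ and integrating by parts term by term, reducing everything to first- and second-order derivatives of $u$ and $v$ and then checking that the second-order (Hessian) contributions cancel. Writing $h=(h_1,\dots,h_l)$ and adopting the summation convention, the definition $M=I\div h-\nabla h-(\nabla h)^T$ gives $M_{ij}=\delta_{ij}\div h-\partial_i h_j-\partial_j h_i$, so that
\[
a\,M\nabla u\cdot\nabla v = a(\div h)(\nabla u\cdot\nabla v) - a(\partial_i h_j)(\partial_j u)(\partial_i v) - a(\partial_j h_i)(\partial_j u)(\partial_i v).
\]
Since the right-hand side of the claimed identity is symmetric under $u\leftrightarrow v$ and $M$ is symmetric, the two gradient-of-$h$ terms may be treated on an equal footing.

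First I would handle the divergence term by applying the divergence theorem to the vector field $a(\nabla u\cdot\nabla v)\,h$. Expanding $\div\bigl(a(\nabla u\cdot\nabla v)h\bigr)$ yields the boundary contribution $\int_{\bndry} a(\nabla u\cdot\nabla v)(h\cdot n)$, the volume term $-\int_{\Omega}(\nabla u\cdot\nabla v)(h\cdot\nabla a)$, and the two leftover Hessian terms $-\int_{\Omega} a\,h_k\bigl[(\partial_k\partial_i u)(\partial_i v)+(\partial_i u)(\partial_k\partial_i v)\bigr]$. The boundary term and the $h\cdot\nabla a$ term already match the target exactly; only the Hessian terms remain to be absorbed.

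Next I would integrate by parts the two remaining terms, each in a single coordinate direction: for $-\int_{\Omega} a(\partial_i h_j)(\partial_j u)(\partial_i v)$ I transfer $\partial_i$ off $h_j$, and for $-\int_{\Omega} a(\partial_j h_i)(\partial_j u)(\partial_i v)$ I transfer $\partial_j$. The divergence structure $\partial_i(a\,\partial_i v)=\div(a\nabla v)$ (respectively $\partial_j(a\,\partial_j u)=\div(a\nabla u)$) then produces precisely the target volume terms $\int_{\Omega}\div(a\nabla v)(h\cdot\nabla u)$ and $\int_{\Omega}\div(a\nabla u)(h\cdot\nabla v)$, the boundary terms $-\int_{\bndry}a\,\pdfrac{v}{n}(h\cdot\nabla u)$ and $-\int_{\bndry}a\,\pdfrac{u}{n}(h\cdot\nabla v)$, and two further Hessian terms $\int_{\Omega} a\,h_j(\partial_i v)(\partial_i\partial_j u)$ and $\int_{\Omega} a\,h_i(\partial_j u)(\partial_i\partial_j v)$. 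The key, and essentially the only nontrivial, point is that after relabelling indices and using $\partial_i\partial_j=\partial_j\partial_i$ these exactly cancel the two leftover Hessian terms from the divergence step; collecting the surviving terms then gives the stated identity.

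The regularity hypotheses $a\in H^1(\Omega)$, $u,v\in H^2(\Omega)$ and $h\in C^{2,1}(\overline U,\Rs^l)$ are precisely what is needed to make each integration by parts licit: every integrand produced lies in $L^1(\Omega)$ (using $H^1(\Omega)\hookrightarrow L^4(\Omega)$ in dimension $l\in\{2,3\}$ to control the triple products of gradients) and all boundary traces are well defined via the trace theorem. The cleanest way to render this rigorous is to establish the identity first for $a,u,v\in C^\infty(\overline\Omega)$, where all manipulations are classical, and then to pass to the limit by density, each term being continuous with respect to the $H^1\times H^2\times H^2$ topology. I expect the bookkeeping of the four Hessian terms to be the main obstacle, but it is purely a matter of careful index relabelling rather than any analytic subtlety.
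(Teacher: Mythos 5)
Your proposal is correct, and the computation checks out: the divergence-theorem step on $a(\nabla u\cdot\nabla v)h$ produces exactly the boundary term, the $h\cdot\nabla a$ term, and two Hessian leftovers, and the two single-index integrations by parts on the $\nabla h$ terms yield the remaining volume and boundary terms plus two Hessian terms that cancel the leftovers after relabelling (using $\partial_i\partial_j=\partial_j\partial_i$); the Hölder/trace bookkeeping with $a\in H^1$, $u,v\in H^2$, $h\in C^{2,1}$ is also as you describe. The paper does not write this argument out: it simply cites \cite[Lemma 5]{abda2013dirichlet} for the case $a=1$ and asserts that the general case is a straightforward extension, so your proof is in effect the self-contained version of exactly that extension, carried out by the same integration-by-parts mechanism the cited lemma relies on.
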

\begin{proof}
The proof in the case $a = 1$ follows by~\cite[Lemma 5]{abda2013dirichlet}. The general case $a \in H^1(\Omega)$ follows by a straightforward extension of the arguments there.
\end{proof}
\noindent We now have all the tools to rigorously compute the shape derivative.
\begin{theorem}\label{th:derth1}
Let the assumptions made in \cref{prop:uniformwellp,th:Holder} hold and let $\psi$
and $p$ be the solutions of the state \eqref{pb:wellposedness} and adjoint \eqref{eq:sysAdjKuz1} problems, respectively. Further, let the assumptions made in this section hold. 
Then the shape derivative for cost function $J$, defined in \eqref{eq:costf1}, exists in the direction of any $h \in \mathcal{D}$ and is given by
\begin{multline}\label{eq:diffexpr}
\diff J(\Omega) h =\;  \int_0^T \int_{\bndry}(\pdfrac{}{n} ((c^2 g+b \dot g) p) + (c^2 g + b \dot g)p \kappa) (h\cdot n) \ds\dt \\
- \int_0^T\int_{\bndry} \left((1-2k\dot\psi)\ddot \psi p + c^2 \nabla p \cdot \nabla \psi + b \nabla p \cdot \nabla \dot \psi - 2 \sigma p\nabla \psi \cdot \nabla \dot\psi \right) (h\cdot n) \ds\dt,
\end{multline}
where $\kappa$ stands for the mean curvature of $\bndry$.
\end{theorem}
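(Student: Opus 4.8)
The plan is to use the adjoint-based variational framework of \cite{ito2008variational}, which sidesteps any material derivative of the state. Writing the transformed cost on the reference domain via the method of mappings and introducing the Lagrangian
\[
\mathcal{L}(d,\varphi,\pi) = \frac{1}{2}\int_0^T\int_{\Omega}(\varphi - \psi_D)^2\chi_D I_d \dxs - \int_0^T R_d(\varphi,\pi)\dt,
\]
where $R_d(\varphi,\pi)$ denotes the weak residual of the transformed state equation \eqref{eq:holder2} tested against $\pi$, I would first record the two defining identities: since $\psi^d$ solves \eqref{eq:holder2}, one has $J(\Omega_d) = \mathcal{L}(d,\psi^d,\pi)$ for every $\pi$, and the adjoint $p$ from \eqref{eq:sysAdjKuz1} (well-posed by \cref{th:adjreg}) is precisely the choice making $\partial_\varphi\mathcal{L}(0,\psi,p) = 0$.

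The first key step is to justify freezing the state, i.e.\ $\diff J(\Omega)h = \partial_d\mathcal{L}(0,\psi,p)$. I would split
\[
\frac{J(\Omega_d)-J(\Omega)}{d} = \frac{\mathcal{L}(d,\psi^d,p)-\mathcal{L}(d,\psi,p)}{d} + \frac{\mathcal{L}(d,\psi,p)-\mathcal{L}(0,\psi,p)}{d}.
\]
The second quotient converges to $\partial_d\mathcal{L}(0,\psi,p)$ by smoothness of $d\mapsto(I_d,M_d,w_d,g^d)$. For the first, note that $\mathcal{L}(d,\cdot,p)$ is quadratic in $\varphi$; its linear part at $\psi$ is $\partial_\varphi\mathcal{L}(d,\psi,p) = \partial_\varphi\mathcal{L}(0,\psi,p) + \bigO(\absd) = \bigO(\absd)$ in the relevant dual norm, so applied to $\psi^d-\psi$ and divided by $d$ it is $\bigO(\|\psi^d-\psi\|_X)$, which vanishes by \cref{th:Holder}. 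The quadratic remainder is controlled by combining the uniform $W$-bound of \cref{prop:uniformwellp} (to absorb the one factor not controlled by $X$, namely $\ddot{(\psi^d-\psi)}$) with the $X$-smallness from \cref{th:Holder}, giving an $\smallO(1)$ contribution.

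It then remains to compute $\partial_d\mathcal{L}(0,\psi,p)$. Because $\supp h\cap\overline{D_S}=\emptyset$ forces $\div h = 0$ on $D_S$, the tracking part contributes nothing, and I am left with $-\int_0^T\partial_d R_d(\psi,p)\dt$. Differentiating the coefficients using $I_d'(0)=\div h$, $(M_d)'(0)=M$ from \eqref{eq:smoothdefM}, and $w_d'(0)=\div_{\bndry}h$, the principal difficulty is the family of interior integrals $\int_{\Omega} aM\nabla u\cdot\nabla v$. Here I would apply the generalized identity \cref{lem:trans2} with $(a,u,v)$ equal to $(1,\psi,p)$, $(1,\dot\psi,p)$ and $(p,\dot\psi,\psi)$, the last weighted case being exactly what the $\sigma$-nonlinearity requires. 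Grouping the resulting volume terms by the gradient-shift factors $h\cdot\nabla p$, $h\cdot\nabla\psi$, $h\cdot\nabla\dot\psi$, the $h\cdot\nabla p$ group collapses via the strong form of the state equation \eqref{pb:wellposedness}, while the remaining interior terms are eliminated through the strong form of the adjoint equation \eqref{eq:sysAdjKuz1} after integration by parts in time and space; here the at-rest conditions $\psi(0)=\dot\psi(0)=0$ together with the terminal conditions $p(T)=\dot p(T)=0$ kill the time-boundary contributions.

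The final step is the boundary bookkeeping, which I expect to be the most delicate. The boundary terms produced by \cref{lem:trans2} split into the Hadamard-type part $\bigl((1-2k\dot\psi)\ddot\psi p + c^2\nabla p\cdot\nabla\psi + b\nabla p\cdot\nabla\dot\psi - 2\sigma p\nabla\psi\cdot\nabla\dot\psi\bigr)(h\cdot n)$ appearing in \eqref{eq:diffexpr}, and a remainder of non-normal terms pairing $\partial_n\psi$, $\partial_n\dot\psi$, $\partial_n p$ with tangential gradients. These latter terms I would cancel against the shape derivative of the boundary excitation $\int_{\bndry}w_d(c^2 g^d + b\dot g^d)p$, invoking the state Neumann data $\partial_n\psi=g$, $\partial_n\dot\psi=\dot g$ and the adjoint boundary condition $c^2\partial_n p - b\partial_n\dot p + 2\sigma g\dot p = 0$. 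The surface-measure derivative $w_d'(0)=\div_{\bndry}h$, rewritten through the tangential-divergence/curvature identity $\int_{\bndry}(\dot F + F\div_{\bndry}h) = \int_{\bndry}(\partial_n F + \kappa F)(h\cdot n)$ with $F = (c^2 g + b\dot g)p$, then yields precisely the curvature and normal-derivative terms in the first line of \eqref{eq:diffexpr}. The whole argument hinges on the $\sigma$-term being handled consistently across \cref{lem:trans2}, the adjoint boundary condition, and the interior cancellation; this interplay, rather than any single estimate, is the crux.
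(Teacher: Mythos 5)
Your overall route coincides with the paper's: rearrange the difference quotient via the adjoint (the framework of \cite{ito2008variational}), use \cref{th:Holder} to freeze the state, apply \cref{lem:trans2} — including the weighted case $(p,\dot\psi,\psi)$ for the $\sigma$-nonlinearity — and cancel the interior terms against the strong forms of the state and adjoint equations, with exactly the boundary bookkeeping you describe (Neumann data, adjoint boundary condition, and the curvature term from $(w_d)'(0)$). The one place where your argument has a genuine gap is the control of the quadratic remainder when replacing $\psi^d$ by $\psi$ in the Lagrangian.

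That remainder contains the term $-2k\int_0^T\int_\Omega(\dot\psi^d-\dot\psi)(\ddot\psi^d-\ddot\psi)\,p$. You propose to bound $\ddot\psi^d-\ddot\psi$ by the uniform $W$-estimate of \cref{prop:uniformwellp} and $\dot\psi^d-\dot\psi$ by the $X$-smallness of \cref{th:Holder}. But \cref{th:Holder} only gives $\|\psi^d-\psi\|_X=\smallO(\sqrt{\absd})$ while the $W$-bound gives only $\bigO(1)$ for the second factor, so after dividing by $d$ you obtain $\smallO(\absd^{-1/2})$, which does not vanish; the same difficulty hits the linear term $\partial_\varphi\mathcal{L}(d,\psi,p)[\psi^d-\psi]$ if it is left acting on $\ddot{(\psi^d-\psi)}$. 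The paper's fix is algebraic rather than analytic: it uses
\[
\pdfrac{}{t}\bigl((1-2k\dot\psi)(\dot\psi^d-\dot\psi)\bigr)
=(1-2k\dot\psi^d)\ddot\psi^d-(1-2k\dot\psi)\ddot\psi+k\,\pdfrac{}{t}(\dot\psi^d-\dot\psi)^2
\]
together with the analogous identity producing $-\tfrac12\pdfrac{}{t}\vert\nabla(\psi^d-\psi)\vert^2$ for the gradient terms, so that after one integration by parts in time (using $p(T)=\dot p(T)=0$) the quadratic remainders become $-k(\dot\psi^d-\dot\psi)^2\dot p$ and $-\sigma\vert\nabla(\psi^d-\psi)\vert^2\dot p$. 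Both factors in each product are then controlled by the $X$-norm, each is $\smallO(\sqrt{\absd})$, and the quotient by $d$ is $\smallO(1)$. Without these identities (or a Hölder estimate on $\ddot\psi^d-\ddot\psi$, which the paper does not prove), your remainder estimate does not close; the rest of your outline is sound and matches the paper's proof.
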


\begin{proof}
Inspired by the rearrangement technique of \cite{kaltenbacher2016shape,ito2008variational}, we write:
\begin{align*}
\begin{split}
J(\Omega_d)-J(\Omega) = & \,\frac{1}{2}\int_0^T\int_{\Omega} (I_d (\psi^d - \psi^d_D)^2 \chi_D-(\psi - \psi_D)^2 \chi_D)\dxs \\
 = & \, \begin{multlined}[t]\frac{1}{2}\int_0^T\int_{\Omega} \left( (I_d -1)(\psi^d - \psi^d_D)^2 + (\psi^d - \psi)^2 + \cancel{(\psi - \psi_D)^2} \right. \\
 + (\psi_D- \psi^d_D)^2 - \cancel{(\psi - \psi_D)^2} + 2 (\psi^d - \psi) (\psi - \psi_D) \\ 
\left. +2 (\psi^d - \psi)(\psi_D- \psi^d_D)  +2 (\psi - \psi_D)(\psi_D- \psi^d_D) \right)\chi_D\dxs.\end{multlined}
\end{split}
\end{align*}
We use \cref{th:Holder} together with the fact that $d \mapsto \psi_D^d$ is differentiable at $d = 0$ to arrive
at
\begin{align*}
\diff J(\Omega) h = &\int_0^T \int_{\Omega} \left(j(\psi) \div h - j'(\psi)\nabla \psi_D\cdot h\right) \chi_D \dx\dt 
\\ & + \lim_{d\rightarrow0} \frac{1}{d} \int_0^T\int_{\Omega} j'(\psi) (\psi^d-\psi) \dx\dt,
\end{align*}
with $j'(\psi)=(\psi-\psi_{D})\chi_D$. However, since $\supp h \cap \overline {D_S} = \emptyset$, it reduces to
\begin{align*}
\diff J(\Omega) h = & \lim_{d\rightarrow0} \frac{1}{d} \int_0^T\int_{\Omega} j'(\psi) (\psi^d-\psi) \dx\dt.
\end{align*}
We notice that the integral above resembles the right-hand side of the adjoint problem \eqref{eq:weak_adj_1}. Thus testing the adjoint problem with $\psi^d-\psi \in L^2(0,T; H^1(\Omega))$ yields
\begin{align*}
\begin{split}
\int_0^T \int_{\Omega} j'(\psi) (\psi^d-\psi) \dxs = & \int_0^T\int_{\Omega}\pdfrac{}{t}\left((1-2k {\dot\psi})\dot p\right) (\psi^d-\psi) \dxs  
\\&+ \int_0^T\int_{\Omega}(c^2 \nabla p - b\nabla\dot p  + 2 \sigma \dot p \nabla{\psi}) \cdot\nabla (\psi^d-\psi)\dxs.
\end{split}
\end{align*}
\noindent Integrating twice by parts in the first term on the right yields 
\begin{align}\label{eq:adj1dereq}
\begin{split}
\int_0^T \int_{\Omega} j'(\psi) (\psi^d-\psi) \dx\dt = &  \int_0^T\int_{\Omega}\pdfrac{}{t}\left((1-2k {\dot\psi}) (\dot\psi^d-\dot\psi)\right) p \dx \dt
\\&+ \int_0^T\int_{\Omega}(c^2 \nabla p - b\nabla\dot p  + 2 \sigma \dot p \nabla{\psi}) \cdot\nabla (\psi^d-\psi)\dx \dt.
\end{split}
\end{align}
We note that the following equality holds: 
\begin{align*}
\int_0^T\int_{\Omega} \dot p \nabla{\psi} \cdot\nabla (\psi^d-\psi)\dx 
=- \int_0^T\int_{\Omega}p \left(\nabla{\dot\psi} \cdot\nabla (\psi^d-\psi)+ \nabla{\psi} \cdot\nabla (\dot\psi^d-\dot\psi)\right)\dx,
\end{align*}
and we can rely on the following identities to deal with the nonlinear terms in \eqref{eq:adj1dereq}:
\begin{align*}
\pdfrac{}{t}\left((1-2k {\dot\psi}) (\dot\psi^d-\dot\psi)\right) =  (1-2k\dot\psi^d)\ddot \psi^d - (1-2k\dot\psi)\ddot \psi + k \pdfrac{}{t}(\dot\psi^d -\dot\psi)^2, 
\end{align*}
\begin{align*}
\nabla{\dot\psi} \cdot\nabla (\psi^d-\psi)+ \nabla{\psi} \cdot\nabla (\dot\psi^d-\dot\psi) =  \nabla \dot\psi^d \cdot \nabla \psi^d - \nabla \dot\psi \cdot \nabla \psi - \frac{1}{2} \pdfrac{}{t}\vert\nabla(\psi^d - \psi)\vert^2.
\end{align*}
Using these identities we infer
\begin{equation*}
\begin{aligned}
&\int_0^T \int_{\Omega}   j'(\psi) (\psi^d-\psi) \dx\dt\\
=&\, \begin{multlined}[t]  \int_0^T\int_{\Omega} \left\{(1-I_d) (1-2k\dot\psi^d)\ddot \psi^d p - k (\dot\psi^d -\dot\psi)^2 \dot p - \sigma \vert\nabla(\psi^d - \psi)\vert^2\dot p \right.  \\ 
 \left. +  c^2 (I-M_d)\nabla p \cdot\nabla \psi^d + b (I-M_d)\nabla p \cdot\nabla \dot\psi^d - 2 \sigma (I-M_d)(\nabla \dot\psi^d \cdot \nabla \psi^d) p  \right\}\dx \dt\\
 + \int_0^T\int_{\Omega} \left\{  I_d (1-2k\dot\psi^d)\ddot \psi^d p \right. \\+\left. \left(c^2 M_d \nabla p \cdot\nabla \psi^d + b M_d \nabla p \cdot\nabla \dot\psi^d - 2 \sigma (M_d \nabla \dot\psi^d \cdot \nabla \psi^d) p \right)\right\}\dx\dt \\
 - \int_0^T \int_{\Omega} \left\{  (1-2k\dot\psi)\ddot \psi p  -  \left(c^2 \nabla p \cdot\nabla \psi + b\nabla p \cdot\nabla \dot\psi - 2 \sigma (\nabla \dot\psi \cdot \nabla \psi) p \right)\right\} \dx\dt.\end{multlined}
\end{aligned}
\end{equation*}
If we use the weak form \eqref{eq:holder2} satisfied by $\psi^d$ with $\phi=p$ as the test function to replace the last three lines and further employ \cref{th:Holder} for the $\frac{1}{2}$-H\"{o}lder continuity of $\dot\psi$ and $\nabla\psi$, 
we obtain 
\begin{align*}
\begin{split}
&\lim_{d\rightarrow0} \frac{1}{d} \int_0^T \int_{\Omega} j'(\psi) (\psi^d-\psi) \dx \dt
\\=&\,\begin{multlined}[t] - \int_0^T\int_{\Omega} \left(\div h (1-2k\dot\psi)\ddot \psi p \right)\dx\dt  \\ 
  - \int_0^T\int_{\Omega} \left(c^2 M\nabla p \cdot\nabla \psi + b M\nabla p \cdot\nabla \dot\psi - 2 \sigma (M\nabla \dot\psi \cdot \nabla \psi) p \right) \dx\dt \\
 + \lim_{d\rightarrow0} \frac{1}{d}\left\{\int_0^T \int_{\bndry} w_d \left(c^2 g^d + b \dot g^d\right) p \, \ds\dt - \int_0^T \int_{\bndry} \left(c^2 g + b \dot g\right) p \, \ds\dt\right\}.\end{multlined}
\\
=&\, I+II+III,
\end{split}
\end{align*}
where $M$ is defined in \eqref{eq:smoothdefM}. Note that this passage to the limit is made possible by the uniform boundedness in $d$ of $\psi^d$ and $\dot{\psi^d}$ given by \cref{prop:uniformwellp}. 
This expression would correspond to the volume representation of the shape derivative. To arrive at the boundary representation predicted by the Delfour--Hadamard--Zol\'esio structure theorem, we integrate by parts the first term with respect to space
\begin{align*}
\begin{split}
I= & \int_0^T\int_{\Omega} (1-2k \dot \psi)\ddot \psi\nabla p \cdot h \dx\dt+ \int_0^T\int_{\Omega}  p\nabla((1-2k \dot \psi)\ddot \psi)\cdot  h  \dx\dt \\
& - \int_0^T\int_{\bndry} (1-2k\dot\psi)\ddot \psi p (h\cdot n)\dx\dt.
\end{split}
\end{align*}
We note that
\[
\nabla \left((1-2k\dot\psi)\ddot \psi \right) = -2k\ddot \psi\nabla \dot\psi + (1-2k\dot\psi) \nabla\ddot \psi = \pdfrac{}{t} \left((1-2k\dot\psi) \nabla \dot\psi\right).
\]
We then have after integration by parts ($h$ invariant with time):
\begin{align*}
\begin{split}
\int_0^T\int_{\Omega}  p\nabla((1-2k \dot \psi)\ddot \psi)\cdot  h  \dx\dt = & \int_0^T\int_{\Omega}  \pdfrac{}{t}\left((1-2k \dot \psi)\dot p\right) \nabla \psi\cdot  h  \dx\dt, 
\end{split}
\end{align*}
which yields 
\begin{align*}
\begin{split}
I = & \int_0^T\int_{\Omega} (1-2k \dot \psi)\ddot \psi\nabla p \cdot h \dx\dt+ \int_0^T\int_{\Omega}  \pdfrac{}{t}\left((1-2k \dot \psi)\dot p\right) \nabla \psi\cdot  h \dx\dt 
\\ &- \int_0^T\int_{\bndry} (1-2k\dot\psi)\ddot \psi p (h\cdot n) \dx\dt.
\end{split}
\end{align*}
Next we wish to transform $II$. By applying \cref{lem:trans2} to the term \(\int_{\Omega} (M\nabla \dot\psi \cdot \nabla \psi) p \dx,\) and integrating by part in time, we find that
\begin{align*}
\begin{split}
 &\int_0^T\int_{\Omega}(M\nabla  \dot\psi \cdot \nabla \psi) p \dx\dt\\
=& - \int_0^T\int_{\Omega} \left(\nabla \cdot (\dot p\nabla \psi) (h\cdot \nabla \psi) 
+ (\nabla \psi \cdot \nabla \dot\psi) (h\cdot\nabla p) \right)\dx\dt\\
& +\int_0^T\int_{\bndry} \dot p \pdfrac{\psi}{n}(h\cdot \nabla \psi)\ds\dt + \int_0^T\int_{\bndry} p\nabla \psi \cdot \nabla \dot\psi (h\cdot n)\ds\dt,
\end{split}
\end{align*}
Similarly, we use \cref{lem:trans2} to infer
\begin{align*}
\begin{split}
&- \int_0^T\int_{\Omega}  (c^2  M\nabla  p \cdot\nabla \psi +  b M\nabla p \cdot\nabla \dot\psi) \dx\dt\\
= &  - \int_0^T\int_{\Omega} (c^2 \laplace \psi  + b \laplace \dot \psi)(h\cdot \nabla p) \dx\dt
- \int_0^T\int_{\Omega} (c^2 \laplace p  - b \laplace  \dot p)(h\cdot \nabla \psi)\dx\dt\\
& + \int_0^T\int_{\bndry} (c^2\pdfrac{\psi}{n} + b\pdfrac{\dot \psi}{n})(h\cdot \nabla p) \ds\dt
+ \int_0^T\int_{\bndry} (c^2\pdfrac{p}{n} - b\pdfrac{\dot p}{n})(h\cdot \nabla \psi) \ds\dt \\
&-  \int_0^T\int_{\bndry} (c^2 \nabla p \cdot \nabla \psi + b \nabla p \cdot \nabla \dot \psi) (h\cdot n)\ds\dt.
\end{split}
\end{align*}
Altogether and using the Neumann data for $\psi$, this gives us that
\begin{align*}
\begin{split}
II =&  - \int_0^T\int_{\Omega} (c^2 \laplace \psi  + b \laplace \dot \psi+2 \sigma \nabla \psi \cdot \nabla \dot\psi)(h\cdot \nabla p) \dx\dt
\\ & - \int_0^T\int_{\Omega} (c^2 \laplace p  - b \laplace  \dot p+ 2 \sigma \nabla \cdot (\dot p\nabla \psi))(h\cdot \nabla \psi ) \dx\dt
\\&+ \int_0^T\int_{\bndry} (c^2g + b\dot g)(h\cdot \nabla p)\ds\dt \\ 
&-\int_0^T\int_{\bndry} (c^2 \nabla p \cdot \nabla \psi + b \nabla p \cdot \nabla \dot \psi 
- 2 \sigma p\nabla \psi \cdot \nabla \dot\psi ) (h\cdot n)\ds\dt.
\end{split}
\end{align*}
We use the rules of differentiation of mapped functions (see Appendix~\ref{ap:defprop}) to find 
\begin{align*}
\begin{split}
III = &\lim_{d\rightarrow0} \frac{1}{d}\left\{\int_0^T \int_{\bndry} w_d \left(c^2 g^d + b \dot g^d\right) p \ds\dt  - \int_0^T \int_{\bndry} \left(c^2 g + b \dot g\right) p \ds\dt \right\} \\
=& \Mostafa{\left(\int_0^T\int_{\bndry_d} \left(c^2 g^d + b \dot g^d\right) p\circ F_d^{-1} \ds\dt\right)'(0)}\\
=& \int_0^T \int_{\bndry} \left(c^2 g + b \dot g\right) (-\nabla p \cdot h) \ds\dt
\\&+ \int_0^T \int_{\bndry}(\pdfrac{}{n} ((c^2 g+b \dot g) p) + (c^2 g + b \dot g)p \kappa) (h\cdot n)\ds\dt. 
\end{split}
\end{align*}
Finally, calculating $I+II+III$, we conclude that
\begin{align*}
\begin{split}
&\diff J(\Omega) h =
\\& \int_0^T\int_{\Omega} \left((1-2k \dot \psi)\ddot \psi - c^2 \laplace \psi  - b \laplace \dot \psi-2 \sigma \nabla \psi \cdot \nabla \dot\psi\right)\nabla p \cdot h \dx\dt \\
& + \int_0^T\int_{\Omega} \left(\pdfrac{}{t}\left((1-2k \dot \psi)\dot p\right) - c^2 \laplace p  + b \laplace  \dot p- 2 \sigma \nabla \cdot (\dot p\nabla \psi) \right)\nabla \psi \cdot h \dx\dt\\
& + \int_0^T \int_{\bndry}(\pdfrac{}{n} ((c^2 g+b \dot g) p) + (c^2 g + b \dot g)p \kappa) (h\cdot n)  \ds\dt\\
&- \int_0^T\int_{\bndry} \left((1-2k\dot\psi)\ddot \psi p + (c^2 \nabla p \cdot \nabla \psi + b \nabla p \cdot \nabla \dot \psi - 2 \sigma p\nabla \psi \cdot \nabla \dot\psi )\right) (h\cdot n)\ds\dt.
\end{split}
\end{align*}
The second line above vanishes due to the fact that $\psi$ solves the wave equation \eqref{eq:Kuzn_potential}. The third line vanishes by using \eqref{eq:sysAdjKuz1} and then the fact that $\supp h \cap \overline {D_S} =\emptyset$.  Thus we arrive at \eqref{eq:diffexpr}, as claimed. \\
\indent Since $\Omega$ is $C^{2,1}$, the mean curvature $\kappa$ is well defined almost everywhere and belongs to $L^\infty(\bndry)$ \cite[p. 165]{abda2013dirichlet}. Note that due to the established regularities of $\psi$ in \cref{th:nonlinreg} and $p$ in \cref{th:adjreg}, the Eulerian shape derivative  is well defined.
Moreover $\diff J(\Omega) h$ is linear and continuous with respect to $h$. $J$ is, therefore, shape differentiable.
\end{proof}
\subsection{Other relevant objectives}\label{sec:finalTobj}
In what follows, we discuss how the previous analysis can be easily adapted to accommodate other practically relevant objectives. \\[2mm]
\textbf{Tracking the potential at final time.} In practice, we might want to
impose a desired ultrasound output at a target time-stamp \(t_T\in[0,T]\), usually final time $T$. In such cases, the cost function has the form
\begin{align}\label{eq:costf3}
	J_T(\Omega) = \frac{1}{2}\int_{\Omega} \left(\psi(T) - \psi_{D_S}\right)^2 \chi_{D_S} \dx= \int_{\Omega} j_T(\psi) \dx,
\end{align}
where we recall that $D_S \subset \overline{\Omega}$ is the ultrasound focal region. Similarly to before, we can assume $\supp h \cap \overline {D_S} =\emptyset$.

\indent With cost functional $J_T$, it can be shown that the strong form of the adjoint problem is formally given by
\begin{empheq}{align} \label{pb:adj3strong}
	\begin{array}{rl}
	\pdfrac{}{t}\left((1-2k{\dot\psi})\dot p\right) - c^2 \laplace p 
			 + b\laplace \dot p - 2\sigma \nabla \cdot\left(\dot p \nabla{\psi}\right)= 0& \qquad \mathrm{on\;} Q,\\
	c^2\pdfrac{p}{n}-b\pdfrac{\dot p}{n}+ 2\sigma g\dot p =0 &\qquad \mathrm{on\;} \Sigma,\\
	p(T) = 0&\qquad \mathrm{on\; \Omega},\\
	\dot p(T) =- \frac{\left(\psi(T)- \psi_{D_S} \right) \chi_{D_S}}{1-2k{\dot\psi}(T)}&\qquad \mathrm{on\; \Omega}.
	\end{array}
\end{empheq}
This initial-boundary value problem fits into the general theoretical framework of \cref{th:adjreg}. Since the new source term is conveniently zero, all that needs to be shown is that 
$$\dot p(T) =- \frac{\left(\psi(T)- \psi_{D_S} \right) \chi_{D_S}}{1-2k{\dot\psi}(T)} \in H^1(\Omega).$$
This is the case if $\psi_{D_S} \in H^1(\Omega)$. 
We can see that $\dot p(T) \in L^2(\Omega)$. Writing the gradient of $\dot p(T)$ and noticing that $1-2k{\dot\psi}(T) \in L^\infty(\Omega)$ is bounded away from zero ($\psi \in \mathcal{W}$ defined in \eqref{eq:Wballdef}), 
one can readily prove the desired regularity. And subsequently, due to \cref{th:adjreg}, initial boundary value problem \eqref{pb:adj3strong} has a unique solution $p$ in the space $\mathcal{P}$, which is defined in \eqref{eq:Pdef}. 

Using the same rearrangement technique as before, we write the difference and using \cref{th:Holder} (with the embedding $H^1(H^1) \hookrightarrow C^0(H^1)$) together with the  differentiability of $\psi_{D_S}^d$ at $d = 0$ 
and the fact that $\supp h \cap \overline{D_S} = \emptyset$, we infer that 
\begin{align*}
\diff J_T(\Omega) h = & \lim_{d\rightarrow0} \frac{1}{d} \int_{\Omega} j_T'(\psi) (\psi^d(T)-\psi(T)) \dx.
\end{align*}
Testing the weak form \eqref{pb:adj3strong} of the adjoint problem with $\psi^d-\psi \in H^1(0,T; H^1(\Omega))$, we obtain
\begin{align}\label{adj3dereq}
\begin{split}
\int_{\Omega} j_T'(\psi) (\psi^d(T)-\psi(T)) \dx=  - \int_0^T\int_{\Omega} \left((1-2k {\dot\psi})\dot p\right) (\dot \psi^d - \psid) \dxs
\\ +\int_0^T\int_{\Omega}(c^2 \nabla p - b\nabla\dot p + \sigma \dot p \nabla{\psi}) \cdot\nabla(\psi^d-\psi)\dxs.
\end{split}
\end{align}
We further use the fact that $p(T) = \psid^d (0) = \psid (0) = 0$ to integrate once in time the first term 
\begin{multline*}
- \int_0^T\int_{\Omega}(1-2k {\dot\psi})\dot p (\dot \psi^d - \psid) \dxs=\hspace{-3pt} \int_0^T\int_{\Omega} \pdfrac{}{t}\left((1-2k {\dot\psi})(\dot \psi^d - \psid)\right) p \dxs.
\end{multline*}
With this, the right-hand side of \eqref{adj3dereq} is the same as the one obtained in \eqref{eq:adj1dereq}. From there we can continue the computations done in \cref{th:derth1}. We find that the shape derivative has again the form \eqref{eq:diffexpr}.\\[2mm]
\noindent \textbf{Tracking the pressure.} In ultrasound applications, it might be desirable to impose that the pressure 
match a desired value on $D$; i.e,
\begin{align*}
\textup{p} = \textup{p}_D \quad \mathrm{on\;D} \iff \rho\dot \psi = \textup{p}_{D}& \iff \dot \psi = \frac{\textup{p}_{D}}{\rho} \\
& \iff \dot \psi = f_D, \quad \mathrm{where\;}f_D = \frac{\textup{p}_{D}}{\rho}.
\end{align*}
Here $\rho$ is the mass density at each point of the medium on the 
horizon $[0,T]$ or, alternatively, \(\rho : [0,T] \times \overline{\Omega} \rightarrow \Rs_{>0}\). We assume $f_D \in H^1(0,T;L^2(\Omega)) \cap L^\infty(0,T;H^1(\Omega))$ and consider

\begin{align*}
	J_\textup{p}(\psi,\Omega) = \frac{1}{2}\int_0^T \int_{\Omega} \left(\dot\psi - f_D\right)^2 \chi_D \dxs = \int_0^T \int_{\Omega} j_\textup{p}(\psi)\dxs.
\end{align*}
\noindent For $J_\textup{p}$, the strong form of the adjoint problem is given by
\begin{empheq}{align}\label{eq:sysAdjKuz2}
	\begin{array}{rl}
	\pdfrac{}{t}\left((1-2k{\dot\psi})\dot p\right) - c^2 \laplace p 
			 + b\laplace \dot p - 2 \nabla \cdot\left(\dot p \nabla{\psi}\right)= - ({\ddot \psi} - \dot f_D)\chi_D& \ \mathrm{on\;} Q,\\
	c^2\pdfrac{p}{n}-b\pdfrac{\dot p}{n}+ 2g\dot p =0 &\ \mathrm{on\;} \Sigma,\\
	p(T) = 0&\ \mathrm{on\; \Omega},\\
	\dot p(T) =-\frac{\left(\dot\psi(T) - f_D(T)\right)\chi_{D_S}}{1-2k\dot \psi(T)}&\ \mathrm{on\; \Omega}.
	\end{array}
\end{empheq}
One can readily show that \eqref{eq:sysAdjKuz2} verifies the assumptions of \cref{th:adjreg}.
Similarly to before, we compare the functional $J_\textup{p}$ on a reference and a deformed domain, rely on the H\"older continuity of $\psid$, 
together with the fact that $\psi_{D_S}^d=\psi_{D_S}^d(d)$ is differentiable at $d = 0$ to infer that
\begin{align*}
\diff J_\textup{p}(\Omega) h = & \lim_{d\rightarrow0} \frac{1}{d} \int_0^T\int_{\Omega} j_\textup{p}'(\psi) (\psid^d-\psid) \dx\dt.
\end{align*}
We test \eqref{eq:sysAdjKuz2} by $\psi^d - \psi \in H^1(0,T;H^1(\Omega))$ and use the fact that $p(T) = \psid^d (0) = \psid (0) = 0$ to integrate once in time the first term to get
\begin{align}\label{adj2dereq}
\begin{split}
\int_0^T\int_{\Omega} j_\textup{p}'(\psi) (\psid^d-\psid) \dx\dt =  \int_0^T\int_{\Omega} \pdfrac{}{t}\left((1-2k {\dot\psi})(\dot \psi^d - \psid)\right) p \dxs  
\\ + \int_0^T\int_{\Omega}(c^2 \nabla p - b\nabla\dot p  + 2 \sigma \dot p \nabla{\psi}) \cdot\nabla (\psi^d-\psi)\dxs.
\end{split}
\end{align}
The right-hand side of \eqref{adj2dereq} is the same as the one obtained in \eqref{eq:adj1dereq}. From there, we can continue the computations as done in \cref{th:derth1}. We find, similarly, that the shape derivative is of the form \eqref{eq:diffexpr}. The established regularities of $\psi$ in \cref{th:nonlinreg} and $p$ in \cref{th:adjreg}, imply that the shape derivative is well defined.

\section*{Conclusion and outlook}
In this work, we have analyzed shape optimization problems governed by general wave equations that model nonlinear ultrasound propagation and, as such, arise in HIFU applications. In particular, we have established sufficient conditions for the well-posedness and regularity of the underlying wave models with nonhomogeneous Neumann boundary conditions, uniformly with respect to shape deformations, as well as the H\"older continuity of the solutions. Furthermore, we have studied the corresponding adjoint problems and rigorously computed shape derivatives for several objectives of practical interest.

Our results provide a sound basis for the analysis and implementation of suitable numerical algorithms for shape optimization of HIFU waves.
\Mostafa{Indeed, \cref{th:derth1} provides a derivative expression which can be used in a gradient-descent algorithm to find, e.g., the optimal arrangement of piezoelectric trandsucers in a HIFU device.
Calculating the shape derivative necessitates the solving of only two PDEs (a nonlinear state problem and a linear adjoint problem). A simple integral formula gives then the derivative for any vector field. This makes designing HIFU devices using the established formulas for shape sensitivities particularly attractive.}
  Future work will also be concerned with generalizing the presented theoretical framework to allow for sound propagation through media with different relaxation mechanisms.

\appendix
\section{Properties of the perturbation of identity mapping}\label{ap:defprop}
In this appendix, we collect certain helpful properties of the perturbation of identity mapping. We refer to \cite[Lemmas 2.4, 4.2, 4.3 \& 4.8]{murat1976controle}, \cite[Section 2.3.2]{delfour2011shapes}, and
 \cite[Proposition 6.6.2]{berger2012differential} for the following results. \\
 \indent Let $\I = [0,\delta_0]$ with $\delta_0>0$ sufficiently small. Then the following properties hold:
\begin{align*} 
\begin{array}{ll}
F_0 = id,  & I_0 = 1, \\ 
d\rightarrow F_d \in C(\I, C^{2,1} (\overline U, \Rs^l)), &d\rightarrow F_d^{-1} \in C(\I, C^{2,1} (\overline U, \Rs^l)), \\[1mm]
d\rightarrow A_d \in C^1(\I, C^{1,1}(\overline U, \Rs^{l\times l})), &d\rightarrow I_d \in C^1(\I, C^{1,1}(\overline U)), \\[1mm]
& d\rightarrow w_d \in C(\I, C^{1,1}(\bndry)) \cap C^1 (\I, C(\bndry)),   \\
\\
\Mostafa{\left(F_d\right)'(0)}= h, & \Mostafa{\left(F_d^{-1}\right)'(0)} = -h \\[1mm]
\Mostafa{\left(DF_d\right)'(0)} = \nabla h & \Mostafa{\left(DF_d^{-1}\right)'(0)} = \Mostafa{\left(A_d^T\right)'(0)} = -\nabla h \\[1mm]
\Mostafa{\left(I_d\right)'(0)} = \div h & \Mostafa{\left(w_d\right)'(0)} = \div_{\bndry}h = \left. \div h \right\vert_{\bndry} - \nabla h n\cdot n,
\end{array}
\end{align*}
\Mostafa{where, again, $(\cdot)'$ stands for the derivative the deformation mappings with respect to $d$.}

We also recall here some rules of differentiation and integration of the mapped functions; see, for example, 
Propositions 2.29, 2.47, and 2.50 in \cite[Chapter 2]{sokolowski1992introduction}, and Theorem 4.3 in \cite[Chapter 9]{delfour2011shapes} for their proofs. \vspace*{2mm}

\begin{itemize}
  \item Let $\varphi_d \in L^1(\Omega_d)$, then $\varphi_d \circ F_d \in L^1(\Omega_d)$: 
  \[
  \int_{\Omega_d}\varphi_d \dx_d = \int_{\Omega}\varphi_d \circ F_d \det DF_d \dx = \int_{\Omega}I_d\varphi^d  \dx
  \]
  \item Let $\varphi_d \in L^1(\bndry_d)$, then $\varphi^d \in L^1(\bndry)$: 
  \[
  \int_{\bndry_d}\varphi_d \dg_d = \int_{\bndry} w_d \varphi^d \dg,
  \]
  \item $\phi_d \in H^1(\Omega_d)$ if and only if $\phi^d \in H^1(\Omega)$: 
  \[
  (\nabla \varphi_d) \circ F_d = A_d \nabla \varphi^d.
  \] 
  \item Assume that $f\in C((-\delta_0,\delta_0), W^{2,1}(U))$ and $f_d(0)$ exists in $W^{1,1}(U)$. Then
  \begin{align*}
  &\Mostafa{\left( \int_{\bndry_d} f(d,\gamma) \dg \right)'(0)} = \\
  &\int_{\bndry} \left\{ \Mostafa{\left(  f(d,\gamma) \right)'(0)} + \left(\pdfrac{}{n} f(0,\gamma) + \kappa f(0,\gamma)\right)
  (h\cdot n) \right\} \dg,
 \end{align*} 
\end{itemize}
where $\kappa$ stands for the mean curvature of $\bndry$. 

\bibliographystyle{abbrv}
\bibliography{references}
\end{document}